\newdimen\slantmathcorr
\def\oversl#1{
\setbox0=\hbox{$#1$}
\slantmathcorr=\wd0
\hskip 0.2\slantmathcorr \overline{\hbox to 0.8\wd0{
\vphantom{\hbox{$#1$}}}}
\hskip-\wd0\hbox{$#1$}
}
\newtheorem{theorem}{Theorem}[section]
\newtheorem{Lemma}[theorem]{Lemma}
\newtheorem{Corollary}[theorem]{Corollary}
\newtheorem{Definition}[theorem]{Definition}
\newtheorem{Example}[theorem]{Example}
\begin{document}
\title{$GH$-stability and spectral decomposition for group actions}      
\author{Abdul Gaffar Khan, Pramod Das, Tarun Das} 
\maketitle
\begin{center}
Department of Mathematics, Faculty of Mathematical Sciences, 
\\
University of Delhi, New Delhi-110007. 
\\
gaffarkhan18@gmail.com, pramod.math.ju@gmail.com, tarukd@gmail.com
\end{center}    
 
\begin{abstract} 
We study expansivity and the shadowing property for finitely generated group actions on metric spaces. We consider the projecting and lifting problems for actions having these properties. We prove that every expansive action with the shadowing property is strongly $GH$-stable (Gromov-Hausdorff stable). Finally, we introduce sequential shadowing property for finitely generated group actions on metric spaces and show that such shadowing is strong enough to imply the spectral decomposition property.       
\end{abstract}
\maketitle 

Mathematics Subject Classifications (2010): 54H20, 37B05, 37C50 

Keywords:Expansivity, Shadowing, Topological Stability, Hausdorff Distance, Gromov-Hausdorff Stability.    
     
\section{Introduction} 

The theory of dynamical systems deal with dynamical properties of group actions. In more restricted case, a system is called discrete or continuous depending on whether the group under consideration is $\mathbb{Z}$ or $\mathbb{R}$. Although the shadowing property was originated from the work \cite{A} of Anosov in differentiable dynamics, it is one of the extensively studied properties for homeomorphisms or continuous maps in topological dynamics. The underlying idea is to trace each approximate orbit by an actual orbit so that the behaviour of the actual orbit reflects into the approximate one. This property plays significant role in the numerical investigation of various modern dynamical systems occurred in different areas of Applied Mathematics, Physics, Engineering and so on. Though the shadowing is very strong property, mathematicians have found several important dynamical systems satisfying the same. For instance, the tent map on the unit interval, the period doubling map of the circle and the shift map of the shift space on two symbols satisfy this property. Another important dynamical property in topological dynamics is widely known as expansivity. Although the expansive property of symbolic flows was recognized earlier, the first formal introduction of this property for arbitrary homeomorphisms was done in \cite{U1} by Utz. The book \cite{A1} can impart a great deal of knowledge to the reader on expansivity and the shadowing property for homeomorphisms and continuous maps. 
\medskip

In 1970, P. Walters proved \cite{U2} that Anosov diffeomorphisms are topologically stable, a property in the presence of which continuous perturbations does not affect the nature of the system. In 1971, Z. Nitecki proved [\cite{N}, p. 108] that Anosov diffeomorphisms are expansive. In 1975, Bowen proved [\cite{B}, p. 74] that Anosov diffeomorphisms also satisfy the shadowing property. These facts naturally guided Walters to prove \cite{U3} that expansive homeomorphisms with the shadowing property on compact metric spaces are topologically stable. These two results regarding topological stability are popularly known as Walters stability theorems. In \cite{LM}, authors introduced shadowing and topological stability for Borel measures and proved that an expansive measure \cite{M} with shadowing is topologically stable. In \cite{A3}, authors replaced the usual $C^{0}$-distance with the $C^{0}$-Gromov-Hausdorff distance to define the $GH$-stability for homeomorphisms on a compact metric space in the class of all homeomorphisms on possibly different compact metric space. They proved that expansive homeomorphisms with the shadowing property on compact metric space are $GH$-stable. Smale proved \cite{S1} that the non-wandering set of Anosov diffeomorphisms can be decomposed into union of finitely many disjoint closed invariant sets on each of which the diffeomorphism is transitive. Aoki extended this result \cite{A21} to homeomorphisms on compact metric spaces satisfying expansivity and the shadowing property. These two results are popularly known as Smale's spectral decomposition theorems.  
\medskip

Since the dynamics of a homeomorphism on a compact metric space can be seen as the dynamics of the corresponding induced $\mathbb{Z}$-action, it is natural to try to understand the dynamics of finitely generated group actions. One of the most interesting study on expansive actions was published by Hurder in \cite{H1}. Although $\mathbb{S}^{1}$ does not admit expansive homeomorphism, he was able to provide example of expansive action on $\mathbb{S}^1$. Recently, authors of \cite{BDS} studied properties of expansive group actions. In \cite{P1}, authors introduced the shadowing property for $\mathbb{Z}^{p}$-action and obtained sufficient conditions for a linear $\mathbb{Z}^{p}$-action on $\mathbb{C}^{m}$ to have this property. In \cite{O2}, authors extended the shadowing property to finitely generated group actions. In \cite{C3}, authors proved that finitely generated group actions with expansivity and the shadowing property are topologically stable. In \cite{DKY}, authors proved a measurable version of this result for finitely generated group actions. Since spectral decomposition \cite{A1} is a fundamental property of a homeomorphism with expansivity and the shadowing property, it must be interesting to investigate the validity of such a decomposition for finitely generated group actions. In this regard, the study of chain recurrent set is very important. 
\medskip

In this paper, we continue exploring the dynamics of finitely generated group actions on compact metric spaces satisfying expansivity, the shadowing property and sequential shadowing property. In section 2, we recall preliminaries on finitely generated group actions required for subsequent sections. In section 3, we study the chain recurrent set for actions satisfying the shadowing property and characterize expansivity in terms of existence of generators. We discuss the projecting and lifting problems for actions satisfying expansivity as well as the shadowing property. In section 4, we introduce the property of strong $GH$-stability and $GH$-stability for finitely generated group actions and prove that expansive actions with the shadowing property are strongly $GH$-stable and hence, $GH$-stable. In the final section, we introduce the property of sequential shadowing for finitely generated group actions and prove that the phase space of any action with the sequential shadowing property can be decomposed into finitely many closed invariant sets on each of which the action is transitive.   
    
\section{Preliminaries}   
Throughout the paper, $G$ denotes a finitely generated group with finite symmetric generating set $S=\lbrace s_{i}:1\leq i\leq n\rbrace$, $O(G)$ denotes the cardinality of $G$ and $X$, $Y$, $Z$ denote metric spaces. For $k\in\mathbb{N}$, set $G_{k}=\lbrace g\in G:L(g)\leq k\rbrace$, where $L(g)$ denotes the length of $g$ with respect to the generating set $S$. We assume that the length of the identity element $e$ is $0$. Note that, if $G$ is a finite group, then there exists the least $k\in \mathbb{N}$ such that $G_{k+i}= G$, for all $i\geq 1$ and if $G$ is a countably infinite group then $G_{k}\setminus G_{k-1}\neq\phi$, for all $k\in \mathbb{N}$.   
\medskip

A map $\Phi:G\times X\rightarrow X$ is said to be a continuous action of $G$ on $X$ if the following conditions hold.  
\begin{enumerate}
\item [(i)] $\Phi_g=\Phi(g,.)$ is a homeomorphism, for every $g\in G$. 
\item [(ii)] $\Phi_{e}(x)=x$, for all $x\in X$ , where $e$ is the identity element of the group $G$.
\item [(iii)] $\Phi_{g_1g_2}(x)=\Phi_{g_1}(\Phi_{g_2}(x))$, for all $x\in X$ and for all $g_1,g_2\in G$.
\end{enumerate}
\medskip

An action $\Phi$ is said to be uniformly continuous if for each $g\in G$, $\Phi_g$ is uniform equivalence (i.e. both $\Phi_g$ and $\Phi_{g^{-1}}$ are uniformly continuous). We denote the set of all uniformly continuous actions of $G$ on $X$ by $Act(G,X)$. If the phase space is compact, then continuous actions and uniformly continuous actions are the same. An action $\Phi$ is said to be commutative if $\Phi_{s}\circ \Phi_{s'}= \Phi_{s'}\circ \Phi_{s}$ for all $s, s'\in S$. A subset $W\subset X$ is said to be $\Phi$-invariant if $\Phi_g(W)\subset W$, for all $g\in G$.  
\medskip

An action $\Phi\in Act(G,X)$ is said to be expansive, if there exists a constant $\mathfrak{e}>0$ such that for any pair of distinct points $x,y\in X$, there exists $g\in G$ such that $d(\Phi_g(x),\Phi_g(y))>\mathfrak{e}$. Such a constant $\mathfrak{e}$ is said to be an expansive constant for $\Phi$.
\medskip  

Let $\Phi\in Act(G,X)$ and let $\delta>0$. A map $f:G\rightarrow X$ is said to be a $\delta$-pseudo orbit for $\Phi$ (with respect to the generating set $S$) if $d(\Phi_s(f(g)),f(sg))<\delta$, for all $s\in S$, $g\in G$. On the other hand, a map $f:G\rightarrow X$ is said to be $\delta$-traced by some point $x\in X$ if $d(\Phi_g(x),f(g))<\delta$, for all $g\in G$. An action $\Phi$ is said to have shadowing property (with respect to the generating set $S$) if for every $\epsilon>0$ there exists $\delta>0$ such that every $\delta$-pseudo orbit is $\epsilon$-traced by some point in $X$ \cite{O2}.   
\medskip 

Two actions $\Phi\in Act(G,X)$ and $\Psi\in Act(G,X)$ are said to be uniformly conjugate if there exists a uniform equivalence $h:X\rightarrow Y$ such that $h\circ \Phi_g=\Psi_g\circ h$, for all $g\in G$. The map $h$ is called a uniform conjugacy between $\Phi$ and $\Psi$. 
\medskip

The properties of finitely generated group actions which are invariant under uniform conjugacy are called uniform dynamical property. It is known that the shadowing property and expansivity are uniform dynamical property. 
\medskip

A point $x\in X$ is said to be a non-wandering point for $\Phi\in Act(G, X)$ if for every non-empty open neighbourhood $U$ of $x$, there exists a non-identity element $g\in G$ such that $\Phi_{g}(U)\cap U\neq \phi$. The set of all non-wandering points is denoted by $\Omega(\Phi)$. An action $\Phi\in Act(G, X)$ is said to be transitive if for every pair of non-empty open sets $U$ and $V$ of $X$, there exists a non-identity element $g\in G$ such that $\Phi_{g}(U)\cap V\neq \phi$.
\medskip

A continuous onto map $\pi : Y\rightarrow X$ is said to be locally isometric covering map if for each $x\in X$, there exists a neighbourhood $U(x)$ of $x$ such that $\pi^{-1}(U(x)) = \cup_{\alpha}U_{\alpha}$, where $\lbrace U_{\alpha}\rbrace$ is a pairwise disjoint family of open sets and $\pi|_{U_{\alpha}} : U_{\alpha}\rightarrow U(x)$ is an isometry for each $\alpha$. 
\medskip

An isometry between $(X,d_X)$ and $(Y,d_Y)$ is an onto map $i : X\rightarrow Y$ satisfying $d_{Y}(i(x_{1}), i(x_{2}))$ $= d_{X}(x_{1}, x_{2})$, for all $x_{1}, x_{2}\in X$. We say that $X$ and $Y$ are isometric if there exists an isometry between them. 
\medskip

For the metric space $(X,d_X)$, define a bounded metric $d^{X}(x, y)=$ min$\lbrace d_{X}(x, y), 1\rbrace$. Note that, if $\Phi\in Act(G, (X, d_{X}))$ then $\Phi\in Act(G, (X, d^{X}))$. For $A, B\subset X$, we define $d^{X}(A, B) = $inf$\lbrace d^{X}(a, b) \mid a\in A, b\in B\rbrace$. If $A = \lbrace a\rbrace$ then we write $'a'$ only. The Hausdorff distance between two subsets $A,B\subset X$ is given by $d_{H}^{X}(A, B) =$ max $\lbrace$ sup$_{a\in A}d^{X}(a, B)$, sup$_{b\in B}d^{X}(A, b)\rbrace$.  
\medskip

For $\delta > 0$, a non-necessarily continuous map $i: X\rightarrow Y$ between $X$ and $Y$ is said to be a $\delta$-isometry if max$\lbrace d^{Y}_{H}(i(X), Y)$, sup$_{x_{1},x_{2}\in X}|d^{Y}(i(x_{1}), i(x_{2})) - d^{X}(x_{1}, x_{2})|\rbrace < \delta$.  
    


\section{Expansivity and the Shadowing Property} 

In this section, the purpose is to prove the validity of some interesting discrete dynamical results on expansivity and the shadowing property for finitely generated group actions. 
\medskip

Let $x,y\in X$ and $\delta>0$. We say that a map $f:G\rightarrow X$ is a $\delta$-pseudo orbit of $\Phi\in Act(G,X)$ from $x$ to $y$ if there exists distinct $h,k\in G$ such that $f(h)=x$ and $f(k)=y$. Then, $x$ is said to be $\delta$-related to $y$ (written as $x R^S_{\delta} y$) if there exists a $\delta$-pseudo orbit of $\Phi$ from $x$ to $y$. If $x R^S_{\delta} y$ for each $\delta>0$, then we say that $x$ is related to $y$ (written $x R^S y$). The set $CR^S(\Phi)=\lbrace x\in X\mid x R^S x\rbrace$ is called the chain recurrent set of $\Phi$. An action $\Phi\in Act(G,X)$ is said to be chain transitive with respect to $S$ if $x R^S y$ for all $x,y\in X$.  
\medskip

Because of the following result we denote the chain recurrent set by $CR(\Phi)$.   

\begin{theorem}
Let $\Phi\in Act(G, X)$. Then, chain transitivity of $\Phi$ and the chain recurrent set of $\Phi$ are independent of the generating set.    
\label{4.7}
\end{theorem}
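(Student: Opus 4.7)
\bigskip

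\noindent\textbf{Proof plan.} The plan is to fix two finite symmetric generating sets $S$ and $S'$ of $G$ and show that for every $\epsilon>0$ there exists $\delta>0$ such that every $\delta$-pseudo orbit for $\Phi$ with respect to $S$ is automatically an $\epsilon$-pseudo orbit with respect to $S'$. Since the conditions ``$f(h)=x,\,f(k)=y$ for distinct $h,k\in G$'' in the definition of $R^S_\delta$ only refer to the group $G$ and not to the generating set, this immediately gives $CR^S(\Phi)\subseteq CR^{S'}(\Phi)$ and preserves chain transitivity. Swapping the roles of $S$ and $S'$ then closes the loop.

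For the key estimate, fix $s'\in S'$ and write $s'=s_1 s_2\cdots s_m$ with each $s_i\in S$ (possible since $S$ generates $G$). For $g\in G$, set $h_{m+1}=g$ and $h_j=s_j s_{j+1}\cdots s_m g$ for $j=1,\ldots,m$, so that $h_1=s'g$ and $h_j=s_j h_{j+1}$. I would then telescope
\[
d\bigl(\Phi_{s'}(f(g)),\,f(s'g)\bigr)\;\le\;\sum_{k=1}^{m} d\Bigl(\Phi_{s_1\cdots s_{k-1}}\bigl(\Phi_{s_k}(f(h_{k+1}))\bigr),\,\Phi_{s_1\cdots s_{k-1}}\bigl(f(h_k)\bigr)\Bigr).
\]
Each inner pair $\Phi_{s_k}(f(h_{k+1}))$ and $f(h_k)$ is controlled by the hypothesis that $f$ is a $\delta$-pseudo orbit for $S$, since $h_k=s_k h_{k+1}$. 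Because $\Phi\in Act(G,X)$, each map $\Phi_{s_1\cdots s_{k-1}}$ is uniformly continuous, so one can find $\delta_k>0$ with $d(u,v)<\delta_k\Rightarrow d(\Phi_{s_1\cdots s_{k-1}}(u),\Phi_{s_1\cdots s_{k-1}}(v))<\epsilon/m$. Taking $\delta_{s'}:=\min_k\delta_k$ forces the whole sum below $\epsilon$.

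Since $S'$ is finite, I would take $\delta:=\min_{s'\in S'}\delta_{s'}$, which is uniform in $s'$ and thus turns every $\delta$-pseudo orbit for $S$ into an $\epsilon$-pseudo orbit for $S'$. Given $x\in CR^S(\Phi)$ and $\epsilon>0$, apply the definition with this $\delta$ to obtain a $\delta$-pseudo orbit $f$ (with respect to $S$) from $x$ to $x$; by the preceding paragraph, the same $f$ is an $\epsilon$-pseudo orbit from $x$ to $x$ with respect to $S'$, proving $x\in CR^{S'}(\Phi)$. The identical argument applied to chain transitivity, and the symmetric argument interchanging $S$ and $S'$, complete the proof.

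I expect the only subtle point to be that the uniform continuity moduli $\delta_k$ depend on the word $s_1\cdots s_{k-1}$ and hence implicitly on $s'\in S'$, but since $S'$ is finite the resulting minimum $\delta$ is still strictly positive; this is precisely where finite generation of $G$ and $\Phi\in Act(G,X)$ enter in an essential way.
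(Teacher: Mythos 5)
Your proposal is correct and takes essentially the same route as the paper: both express each generator of one set as a word in the other, telescope $d(\Phi_{s'}(f(g)),f(s'g))$ along the intermediate group elements, and control each term by the pseudo-orbit condition together with uniform continuity of the finitely many relevant $\Phi_h$. The only cosmetic difference is that the paper packages the bounded word length via the bi-Lipschitz constant $C$ between the two word norms, while you use finiteness of $S'$ directly; the symmetry and the generating-set-independence of the conditions $f(h)=x$, $f(k)=y$ are handled identically.
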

\begin{proof}
Let $S$ and $S'$ be two generating sets of $G$. Since any two word norms are bi-Lipschitz equivalent, we can choose a constant $C \geq 1$ such that $\frac{|g|_{S'}}{C}\leq |g|_{S} \leq C|g|_{S'}$ for all $g\in G$. Let $\epsilon > 0$ and $0 < \delta < \frac{\epsilon}{C}$ be such that  $d(\Phi_{g}(a), \Phi_{g}(b)) < \frac{\epsilon}{C}$, whenever $d(a,b) < \delta$ for all $a,b\in X$ and $g\in G$ satisfying $|g|_{S'}\leq C$. Let $f$ be a $\delta$-pseudo orbit of $\Phi$ from $x$ to $y$ with respect to $S'$ i.e. $d(\Phi_{s'}(f(g)), f(s'g)) < \delta$ for all $s'\in S$, $g\in G$. 
\medskip

Then, for any $s'_{1},...,s'_{C}\in S'$, $g\in G$
\begin{align*}
d(\Phi_{s'_{C}...s'_{1}}(f(g)), f(s'_{C}...s'_{1}g)) &\leq d(\Phi_{s'_{C}...s'_{2}}(\Phi_{s'_{1}}f(g)), \Phi_{s'_{C}...s'_{2}}(f(s'_{1}g)))\\ 
&+ d(\Phi_{s'_{C}...s'_{2}}(f(s'_{1}g)), \Phi_{s'_{C}...s'_{3}}(f(s'_{2}s'_{1}g)))\\ 
&+...+ d(\Phi_{s'_{C}}(\Phi_{s'_{C-1}}f(s'_{C-2}...s'_{1}g)), \Phi_{s'_{C}}f(s'_{C-1}...s'_{1}g))\\ 
&+ d(\Phi_{s'_{C}}f(s'_{C-1}...s'_{1}g), f(s'_{C}...s'_{1}g)) \\
&<  \frac{\epsilon}{C} + \frac{\epsilon}{C} + ...\delta < \epsilon 
\end{align*}

Thus, $d(\Phi_{h}(f(g), f(hg)) < \epsilon$ for all $g\in G$, $h\in G$ satisfying $|h|_{S'} \leq C$. Since any $s\in S$ satisfies $|s|_{S'}\leq C$, $f$ is an $\epsilon$-pseudo orbit of $\Phi$ from $x$ to $y$ with respect to $S$. So for any pair $x,y\in X$, $x R^{S'} y$ implies $x R^S y$. Therefore, if $\Phi$ is chain transitive with respect to $S'$ then it is chain transitive with respect to $S$ and we conclude that chain transitivity of $\Phi$ is independent of the generating set. 
\medskip

Further, the argument implies that $CR^{S'}(\Phi) \subset CR^{S}(\Phi)$ and in a similar manner, the converse also holds. Therefore, we conclude that the chain recurrent set is independent of the generating set.     
\end{proof}    

\begin{theorem} 
Let $\Phi\in Act(G, X)$ and $\Psi\in Act(G, Y)$ be conjugate. Then, $\Phi$ is chain transitive if and only if $\Psi$ is chain transitive. 
\label{4.8} 
\end{theorem}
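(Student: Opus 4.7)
The plan is to transport pseudo orbits through the conjugacy. By symmetry, it suffices to prove one direction; assume $\Phi$ is chain transitive with uniform conjugacy $h : X \to Y$ satisfying $h \circ \Phi_g = \Psi_g \circ h$ for all $g \in G$. I want to show that for any $u, v \in Y$ and any $\epsilon > 0$, there is an $\epsilon$-pseudo orbit of $\Psi$ from $u$ to $v$ with respect to $S$.

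First I would exploit the uniform equivalence: since $h$ is uniformly continuous, choose $\eta > 0$ such that $d_X(a, b) < \eta$ implies $d_Y(h(a), h(b)) < \epsilon$. Because $h$ is onto, write $u = h(x)$ and $v = h(y)$ for some $x, y \in X$. By the chain transitivity of $\Phi$ applied with tolerance $\eta$, pick an $\eta$-pseudo orbit $f : G \to X$ of $\Phi$ with $f(h_0) = x$ and $f(k_0) = y$ for some distinct $h_0, k_0 \in G$.

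Now I would define $\tilde f := h \circ f : G \to Y$. Clearly $\tilde f(h_0) = u$ and $\tilde f(k_0) = v$. For every $s \in S$ and $g \in G$, the conjugacy relation gives
\begin{equation*}
d_Y\bigl(\Psi_s(\tilde f(g)), \tilde f(sg)\bigr) = d_Y\bigl(h(\Phi_s(f(g))), h(f(sg))\bigr) < \epsilon,
\end{equation*}
since $d_X(\Phi_s(f(g)), f(sg)) < \eta$ by the pseudo-orbit property for $f$. Hence $\tilde f$ is an $\epsilon$-pseudo orbit of $\Psi$ from $u$ to $v$, so $u R^S v$ for $\Psi$, and $\Psi$ is chain transitive. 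The reverse implication follows by running the same argument with $h^{-1}$ in place of $h$ (which is also uniformly continuous by definition of uniform equivalence).

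There is no real obstacle here: the only point to be attentive to is that the conjugacy $h$ is required to be uniformly continuous (in both directions), which is why the definition of uniform conjugacy is stated in terms of uniform equivalence. Without that uniformity one could not convert a single tolerance $\eta$ uniformly over all $s \in S$ and $g \in G$ into an $\epsilon$-bound on the pushed forward orbit; but this is guaranteed by the setup, so the proof reduces to the calculation above.
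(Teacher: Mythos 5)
Your argument is correct and is essentially identical to the paper's own proof: both pull the endpoints back through the surjective conjugacy, use uniform continuity of $h$ to convert the tolerance, and push the pseudo orbit forward via $h\circ f$, with the converse handled symmetrically by $h^{-1}$. No issues.
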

\begin{proof}
Let $h:X\rightarrow Y$ be a conjugacy between $\Phi$ and $\Psi$. Suppose that $\Phi$ is chain transitive. Let $\epsilon>0$ and $y_{1}, y_{2}\in Y$. Then there exists $x_{1}, x_{2}\in X$ such that $h(x_{1}) = y_{1}$, $h(x_{2}) = y_{2}$. Corresponds to $\epsilon$, choose $0 < \delta < \epsilon $ by the uniform continuity of $h$. By chain transitivity of $\Phi$, there exists a $\delta$-pseudo orbit $f:G\rightarrow X$ for $\Phi$ from $x_{1}$ to $x_{2}$ satisfying $f(g_{1}) = x_{1}$ and $f(g_{2}) = x_{2}$ for some $g_1,g_2\in G$. Then, the map $hf:G\rightarrow Y$ satisfies $p(\Psi_{s}(hf(g)), hf(sg)) = p(h\Phi_{s}(f(g)), h(f(sg))) < \epsilon$ for all $s\in S$, $g\in G$. Thus, $hf$ is $\epsilon$-pseudo orbit for $\Psi$ from $y_{1}$ to $y_{2}$. Therefore, we conclude that $\Psi$ is chain transitive. One can prove the converse similarly. 
\end{proof} 
 
\begin{theorem}
If $\Phi\in Act(G,X)$, then the following statements are true 
\begin{enumerate}
\item[(i)] $CR(\Phi)$ is closed.
\item[(ii)] If $\Phi$ is commutative, then $CR(\Phi)$ is $\Phi$-invariant.    
\end{enumerate} 
\label{4.9}
\end{theorem}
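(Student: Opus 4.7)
The plan is to treat the two parts separately. For (i) I would argue that $X\setminus CR(\Phi)$ is open, and for (ii) I would push a given pseudo orbit forward by $\Phi_{g_0}$, using commutativity to preserve the pseudo orbit condition.

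For (i), suppose $x\notin CR(\Phi)$, so there exists $\delta>0$ such that no $\delta$-pseudo orbit of $\Phi$ goes from $x$ to $x$. By uniform continuity of each $\Phi_s$ for $s\in S$ (a finite set), I would choose $0<\delta'<\delta$ so that $d(a,b)<\delta'$ implies $d(\Phi_s(a),\Phi_s(b))<\delta/2$ for every $s\in S$, and then show that the open ball $B(x,\delta'/2)$ misses $CR(\Phi)$. If $y\in B(x,\delta'/2)\cap CR(\Phi)$, take a $(\delta'/2)$-pseudo orbit $f$ of $\Phi$ with $f(h)=f(k)=y$ for distinct $h,k\in G$, and define
\[
\tilde f(g)=\begin{cases} x, & g\in\{h,k\},\\ f(g), & \text{otherwise.}\end{cases}
\]
A short case analysis on whether each of $g$ and $sg$ lies in $\{h,k\}$, combined with the triangle inequality (using $d(x,y)<\delta'/2$ and the choice of $\delta'$), shows that $d(\Phi_s(\tilde f(g)),\tilde f(sg))<\delta$ for all $s\in S$ and $g\in G$. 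Hence $\tilde f$ is a $\delta$-pseudo orbit of $\Phi$ from $x$ to $x$, contradicting the choice of $\delta$.

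For (ii), fix $x\in CR(\Phi)$ and $g_0\in G$; I wish to show $\Phi_{g_0}(x)\in CR(\Phi)$. Given $\delta>0$, use uniform continuity of $\Phi_{g_0}$ to pick $\delta'>0$ such that $d(a,b)<\delta'$ implies $d(\Phi_{g_0}(a),\Phi_{g_0}(b))<\delta$. Choose a $\delta'$-pseudo orbit $f$ of $\Phi$ from $x$ to $x$, with $f(h)=f(k)=x$ for distinct $h,k\in G$, and set $\tilde f:=\Phi_{g_0}\circ f$. Then $\tilde f(h)=\Phi_{g_0}(x)=\tilde f(k)$. A short induction on the word length of $g_0$ in $S$ shows that commutativity of $\Phi$ on generators implies $\Phi_{g_0}\circ\Phi_s=\Phi_s\circ\Phi_{g_0}$ for every $s\in S$; therefore
\[
d(\Phi_s(\tilde f(g)),\tilde f(sg))=d(\Phi_{g_0}(\Phi_s(f(g))),\Phi_{g_0}(f(sg)))<\delta
\]
for all $s\in S$ and $g\in G$. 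Thus $\tilde f$ is a $\delta$-pseudo orbit of $\Phi$ from $\Phi_{g_0}(x)$ to $\Phi_{g_0}(x)$, which gives $\Phi_{g_0}(x)\in CR(\Phi)$.

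Neither step is deep; the main bookkeeping obstacle is the four-case verification of the pseudo orbit condition in (i), where one has to handle the possibility that $s h=k$, $sh=h$, $sk=h$, or $sk=k$, so that the modified values at $h,k$ interact with each other. The key conceptual ingredients are just uniform continuity of the generators (for (i)) and the fact that the assumed commutativity on $S$ propagates to commutativity of $\Phi_{g_0}$ with every generator (for (ii)).
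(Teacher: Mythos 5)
Your proposal is correct and follows essentially the same route as the paper: part (i) is the paper's sequential-closure argument recast as openness of the complement, with the identical perturbation of the pseudo orbit at the two distinguished group elements, and part (ii) is the paper's push-forward of a pseudo orbit under the action (the paper does it generator by generator and invokes symmetry of $S$, while you handle a general $g_0$ via the word-length induction). One constant to tighten: in the case $g,sg\in\{h,k\}$ your triangle inequality yields the bound $\delta/2+\delta'$, so you need $\delta'<\delta/2$ rather than merely $\delta'<\delta$.
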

\begin{proof} 
(i) Let $\epsilon>0$ and $0<\delta<\frac{\epsilon}{3}$ be such that $d(a,b)<\delta$ implies $d(\Phi_{s}(a),\Phi_{s}(b))<\frac{\epsilon}{3}$, for all $s\in S$. Let $\lbrace x_{n}\rbrace$ be a sequence in $CR(\Phi)$ such that $x_{n}$ converges to $x$. 
Choose $k>0$ such that $d(x,x_{n})<\delta$ for all $n\geq k$. Since $x_{k}\in CR(\Phi)$, there exists $\delta$-pseudo orbit $f_{1}:G\rightarrow X$ from $x_{k}$ to $x_{k}$ satisfying $f_{1}(h_{1})=x_{k}=f_{1}(h_{2})$ for distinct $h_{1},h_{2}\in G$. Define $f_2:G\rightarrow X$ by $f_{2}(g)=f_{1}(g)$ if $g\in G\setminus \lbrace h_{1},h_{2}\rbrace$ and $f_{2}(g)=x$ if otherwise. We claim that $f_{2}$ is an $\epsilon$-pseudo orbit from $x$ to $x$. 
\begin{enumerate}
\item[Case 1] Let $sg,g\in G\setminus \lbrace h_{1},h_{2}\rbrace$. Then $d(\Phi_{s}(f_{2}(g)),f_2(sg))=d(\Phi_{s}(f_{1}(g)),f_1(sg))< \delta <\epsilon$ for all $s\in S$. 

\item[Case 2] Let $sg\in G\setminus \lbrace h_{1},h_{2}\rbrace$, $g\in \lbrace h_{1},h_{2}\rbrace$. 
We have $d(\Phi_{s}(f_{1}(g),f_1(sg))<\delta$ for all $s\in S$ 
and $d(\Phi_{s}(f_{1}(g)),\Phi_{s}(x))=d(\Phi_{s}(x_{k}),\Phi_{s}(x))<\frac{\epsilon}{3}$ by the uniform continuity of $\Phi_s$. 
Thus, we have $d(f_{2}(sg), \Phi_{s}(f_{2}(g)))= d(f_{1}(sg),\Phi_{s}(x))\leq d(f_{1}(sg),\Phi_{s}(f_{1}(g))+$ $d(\Phi_{s}(f_{1}(g)), \Phi_{s}(x))<\epsilon$. 

\item[Case 3] Let $sg\in \lbrace h_{1},h_{2}\rbrace$, $g\in G\setminus \lbrace h_{1},h_{2}\rbrace$.
Then, $d(\Phi_{s}(f_{2}(g)),f_2(sg))=d(\Phi_{s}(f_{1}(g),x)\leq d(x,x_{k})+d(x_{k},\Phi_{s}(f_{1}(g)))=d(x,x_{k})+d(f_{1}(sg),\Phi_{s}(f_{1}(g)))<\epsilon$. 

\item[Case 4] Let $sg,g\in \lbrace h_{1},h_{2}\rbrace$. Then, $d(f_{2}(sg),\Phi_{s}(f_{2}(g)))=d(x,\Phi_{s}(x))\leq d(x,x_{k})+d(x_{k}, \Phi_{s}(f_{1}(g)))+d(\Phi_{s}(f_{1}(g)),\Phi_{s}(x)) =
d(x,x_{k})+d(f_{1}(sg),\Phi_{s}(f_{1}(g))) + d(\Phi_{s}(x_k), 
\\
\Phi_{s}(x))<\epsilon$. This completes a proof.    
\end{enumerate}
\medskip

(ii) Let us fix $p\in S$ and $y\in \Phi_{p}(CR(\Phi))$, then there exists $x\in CR(\Phi)$ such that $y=\Phi_p(x)$. Let $\epsilon>0$ be given and $\delta>0$ such that $d(a,b)<\delta$ implies $d(\Phi_{s}(a),\Phi_{s}(b))<\epsilon$, for all $s\in S$. Since $x\in CR(\Phi)$, there exists a $\delta$-pseudo orbit $f:G\rightarrow X$ from $x$ to $x$. Note that $\Phi_{p}f:G\rightarrow X$ is an $\epsilon$-pseudo orbit from $\Phi_{p}(x)$ to $\Phi_{p}(x)$ and hence $\Phi_p(CR(\Phi))\subset CR(\Phi)$. Since the generating set is symmetric, $CR(\Phi)\subset \Phi_{p}(CR(\Phi))$ for all $p\in S$ and hence $\Phi\in Act(G, CR(\Phi))$. 
\end{proof} 

\begin{theorem}
Let $X$ be a compact metric space. If $\Phi\in Act(G,X)$ is a commutative expansive action and $\Phi|_{CR(\Phi)}$ has the shadowing property, then $CR(\Phi)$ is isolated.     
\label{4.10}
\end{theorem}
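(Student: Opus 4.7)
By Theorem \ref{4.9}, commutativity guarantees that $CR(\Phi)$ is a closed $\Phi$-invariant subset of the compact space $X$, so the restriction $\Phi|_{CR(\Phi)}$ is a well-defined action of $G$ on a compact metric space and the shadowing hypothesis on it is meaningful. My aim is to exhibit an open neighbourhood $U$ of $CR(\Phi)$ whose maximal $\Phi$-invariant subset equals $CR(\Phi)$, i.e.\ if $\Phi_g(x)\in U$ for every $g\in G$ then $x\in CR(\Phi)$; this is the usual meaning of isolation for an invariant set.

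Fix an expansive constant $\mathfrak{e}>0$ for $\Phi$. Apply the shadowing property of $\Phi|_{CR(\Phi)}$ with accuracy $\mathfrak{e}/2$ to obtain $\delta>0$ such that every $\delta$-pseudo orbit $G\to CR(\Phi)$ is $(\mathfrak{e}/2)$-traced by some point of $CR(\Phi)$. Using uniform continuity of the generators, choose $0<\eta<\min\{\delta/2,\mathfrak{e}/2\}$ so that $d(a,b)<\eta$ forces $d(\Phi_s(a),\Phi_s(b))<\delta/2$ for every $s\in S$, and set
\[
U=\{x\in X:d(x,CR(\Phi))<\eta\}.
\]
Now assume $\Phi_g(x)\in U$ for every $g\in G$; for each $g$ pick $f(g)\in CR(\Phi)$ with $d(\Phi_g(x),f(g))<\eta$. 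Then for $s\in S$ and $g\in G$,
\[
d(\Phi_s(f(g)),f(sg))\le d(\Phi_s(f(g)),\Phi_{sg}(x))+d(\Phi_{sg}(x),f(sg))<\delta/2+\eta<\delta,
\]
so $f:G\to CR(\Phi)$ is a $\delta$-pseudo orbit of $\Phi|_{CR(\Phi)}$. The shadowing hypothesis yields $z\in CR(\Phi)$ with $d(\Phi_g(z),f(g))<\mathfrak{e}/2$ for every $g\in G$, whence
\[
d(\Phi_g(x),\Phi_g(z))\le d(\Phi_g(x),f(g))+d(f(g),\Phi_g(z))<\eta+\mathfrak{e}/2\le \mathfrak{e}
\]
for all $g\in G$. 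Expansivity then forces $x=z\in CR(\Phi)$, as required.

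The main technical point is the simultaneous need that the auxiliary map $f$ take values inside $CR(\Phi)$, so that the restricted shadowing hypothesis can be invoked, while also remaining close enough to the true orbit of $x$ to let expansivity identify $x$ with the shadowing point $z$. The three nested constants $\mathfrak{e}$, $\delta$, $\eta$ are chosen precisely to balance these requirements; commutativity enters only through Theorem \ref{4.9}(ii), which makes $CR(\Phi)$ invariant so that the restricted action $\Phi|_{CR(\Phi)}$ exists. Aside from this coupling of constants, the argument follows the standard three-epsilon template used for single homeomorphisms.
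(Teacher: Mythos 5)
Your proof is correct and follows essentially the same route as the paper's: define a small closed/open neighbourhood $U$ of $CR(\Phi)$ via uniform continuity of the generators, show that any point whose full orbit stays in $U$ produces a pseudo-orbit inside $CR(\Phi)$, shadow it there, and use expansivity to identify the point with the tracing point. The only differences are cosmetic choices of constants and the use of an open rather than closed neighbourhood.
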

\begin{proof} 
Let $\mathfrak{c}$ be an expansive constant for $\Phi$. For $0 <\beta < \frac{\mathfrak{c}}{2}$, choose $\alpha > 0$ by the shadowing property of $\Phi|_{CR(\Phi)}$. Further, choose $0 < \gamma < min\lbrace \frac{\alpha}{2}, \frac{\mathfrak{c}}{2} \rbrace$ such that $d(x,y) < \gamma$ implies that $d(\Phi_{s}(x), \Phi_{s}(y)) < \frac{\alpha}{2}$, for all $s\in S$. Set $U = \lbrace y\in X\mid d(y, CR(\Phi)) \leq \gamma \rbrace$ and $\cap_{g\in G}\Phi_{g}(U) = U_{\Phi}$. Clearly, $U$ is compact being a closed subset of a compact metric space. It is also easy to see that $CR(\Phi)\subset U_{\Phi}$. For the reverse containment, choose $y\in U_{\Phi}$. Since $\Phi_{g}(y)\in U$ for all $g\in G$, there exists $x_{g}\in CR(\Phi)$ such that $ d(\Phi_{g}(y) , x_{g}) \leq \gamma$, for all $g\in G$. Thus $d(\Phi_{s}(x_{g}), x_{sg}) \leq d(\Phi_{s}(x_{g}), \Phi_{sg}(y)) + d(\Phi_{sg}(y), x_{sg}) < \frac{\alpha}{2} + \gamma < \alpha$, for all $g\in G$ and all $s\in S$. Thus, $f : G\rightarrow X$ defined by $f(g) = x_{g}$ forms a $\alpha$-pseudo orbit for $\Phi|_{CR(\Phi)}$. Let $x\in CR(\Phi)$ be a $\beta$-tracing point for $f$ but this implies that $d(\Phi_{g}(x), \Phi_{g}(y))\leq d(\Phi_{g}(x), x_{g}) + d(x_{g}, \Phi_{g}(y)) < \beta + \gamma < \mathfrak{c}$. By the expansivity of $\Phi$, we get that $x = y$ and therefore $U_{\Phi} \subset CR(\Phi)$. Hence, $CR(\Phi)$ is isolated. 
\end{proof}   

We now extend the concept of generators \cite{K1} for homeomorphisms to group actions and characterize expansivity in terms of existence of generators.    
\begin{Definition} 
A finite open cover $\Lambda= \lbrace A_{1}, A_{2}, . . ., A_{n} \rbrace$ of $X$ is said to be  
\\
(i) generator for $\Phi\in Act(G,X)$ if for any family $\lbrace U_{g} \rbrace _{g \in G}$ of members of $\Lambda$, $\cap_{g \in  G} \Phi_{g^{-1}}(\oversl{U_{g}})$ contains at most one point. 
\\
(ii) weak generator for $\Phi$ if for any family $\lbrace U_{g} \rbrace _{g \in G}$ of members of $\Lambda$, $\cap_{g \in G} \Phi_{g^{-1}}(U_{g})$ contains at most one point. 
\label{3.3}
\end{Definition} 

\begin{theorem} 
Let $X$ be a compact metric space and $\Phi \in Act(G,X)$. Then, the following statements are equivalent 
\begin{enumerate}
\item[(i)] $\Phi$ is expansive.
\item[(ii)] $\Phi$ admits a generator.
\item[(iii)] $\Phi$ admits a weak generator.
\end{enumerate}
\label{3.4}  
\end{theorem}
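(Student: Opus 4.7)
The plan is to prove the cyclic chain $(i)\Rightarrow(ii)\Rightarrow(iii)\Rightarrow(i)$. The argument is a direct group-action analogue of Keynes' classical characterization of expansivity via generators and relies only on the compactness of $X$, so the generating set $S$ plays no essential role; in particular I will not need finite generation of $G$.

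For $(i)\Rightarrow(ii)$, fix an expansive constant $\mathfrak{e}>0$. By compactness of $X$, choose a finite open cover $\Lambda=\{A_{1},\dots,A_{n}\}$ of $X$ by balls of radius $\mathfrak{e}/4$, so that the diameter of $\overline{A_{i}}$ is at most $\mathfrak{e}/2$ for each $i$. Given any family $\{U_{g}\}_{g\in G}$ of members of $\Lambda$ and any two points $x,y\in \bigcap_{g\in G}\Phi_{g^{-1}}(\overline{U_{g}})$, the points $\Phi_{g}(x),\Phi_{g}(y)$ lie in $\overline{U_{g}}$ for every $g$, so $d(\Phi_{g}(x),\Phi_{g}(y))\leq \mathfrak{e}/2\leq \mathfrak{e}$ for all $g\in G$. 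Expansivity forces $x=y$, and hence $\Lambda$ is a generator.

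The implication $(ii)\Rightarrow(iii)$ is immediate: $U_{g}\subset \overline{U_{g}}$ gives $\bigcap_{g\in G}\Phi_{g^{-1}}(U_{g})\subset \bigcap_{g\in G}\Phi_{g^{-1}}(\overline{U_{g}})$, so every generator is automatically a weak generator. For $(iii)\Rightarrow(i)$, let $\Lambda=\{A_{1},\dots,A_{n}\}$ be a weak generator and, using compactness, let $\delta>0$ be a Lebesgue number for $\Lambda$. Put $\mathfrak{e}=\delta/2$ and suppose that $x,y\in X$ satisfy $d(\Phi_{g}(x),\Phi_{g}(y))\leq \mathfrak{e}$ for every $g\in G$. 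Then each two-point set $\{\Phi_{g}(x),\Phi_{g}(y)\}$ has diameter strictly less than $\delta$, so the Lebesgue property furnishes $U_{g}\in \Lambda$ with $\{\Phi_{g}(x),\Phi_{g}(y)\}\subset U_{g}$, i.e.\ $x,y\in \Phi_{g^{-1}}(U_{g})$. Therefore $x,y\in \bigcap_{g\in G}\Phi_{g^{-1}}(U_{g})$, which by the weak generator property contains at most one point, giving $x=y$. Contraposing, $\mathfrak{e}$ is an expansive constant for $\Phi$.

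The argument presents no serious obstacle; it is a compactness-and-covering packaging of the expansivity condition. The only care needed is to respect the strict inequality in the definition of expansivity, which is handled on the $(i)\Rightarrow(ii)$ side by choosing balls of radius $\mathfrak{e}/4$ (so closure diameters are $\le \mathfrak{e}/2$) and on the $(iii)\Rightarrow(i)$ side by halving the Lebesgue number, ensuring that the pair $\{\Phi_{g}(x),\Phi_{g}(y)\}$ fits strictly inside some element of the weak generator. Since the group operation enters only through the preimage $\Phi_{g^{-1}}$, the same proof works verbatim for arbitrary group actions on compact metric spaces.
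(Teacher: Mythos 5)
Your proof is correct and follows essentially the same route as the paper: balls of small radius for $(i)\Rightarrow(ii)$, the trivial inclusion for $(ii)\Rightarrow(iii)$, and a Lebesgue-number argument for $(iii)\Rightarrow(i)$. The only difference is that you are slightly more careful with the constants (radius $\mathfrak{e}/4$ and half the Lebesgue number, versus the paper's $\mathfrak{c}/2$ and the Lebesgue number itself), which cleanly handles the strict inequality in the definition of expansivity.
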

\begin{proof}
$(i)\Rightarrow(ii)$ 
Suppose that $\Phi$ is expansive with expansive constant $\mathfrak{c}>0$. Let $\beta$ be a finite open cover of $X$ consisting of open balls of radius $\mathfrak{c} / 2$ and $\lbrace B_{g}\rbrace$ be an arbitrary family of members from $\beta$. If $x ,y \in \cap_{g \in G}(\Phi_{g^{-1}}(\oversl{B_{g}}))$, then $\Phi_{g}(x) , \Phi_{g}(y) \in \oversl{B_{g}}$, for all $g \in G$ or $d(\Phi_{g}(x) , \Phi_{g}(y)) \leq \mathfrak{c}$, for all $g \in G$. By expansivity of $\Phi$, we must have $x=y$. Hence, $\beta$ is a generator for $\Phi$. 
\medskip

$(ii)\Rightarrow(iii)$ Follows from the definition.
\medskip

$(iii)\Rightarrow(i)$
Let $\alpha$ be a weak generator for $\Phi$ and let $\mathfrak{c}$ be the Lebesgue number for $\alpha$.  
Our claim is that $\Phi$ is expansive with expansive constant $\mathfrak{c}$. Let $x,y\in X$ such that $d(\Phi_{g}(x) , \Phi_{g}(y)) \leq \mathfrak{c}$ for all $g\in G$. Since $\mathfrak{c}$ is the Lebesgue number for $\alpha$, there exists $A_{g'} \in \alpha$ such that $\Phi_{g}(x) ,\Phi_{g}(y)\in A_{g^{'}}$ for each $g \in G$. Consider $\lbrace A_{g} \rbrace_{g\in G}$ such that $A_{g}= A_{g'}$ for all $g \in G$. Therefore, $x , y \in \cap_{g \in G}(\Phi_{g^{-1}}(A_{g}))$ which is a contradiction to our assumption. Hence $x = y$ and thus $\Phi$ is expansive with expansive constant $\mathfrak{c}$.  
\end{proof}

The following example justifies the fact that Theorem \ref{3.4} does not hold for non-compact spaces. 
\begin{Example} Let $X=\mathbb{Z}$ be the set of integers equipped with the discrete metric and $G=(\mathbb{Z}^{2}, +)$ with symmetric generating set $\lbrace e_{1}, e_{2}, e_{3}, e_{4}\rbrace$, where $e_{1}= (1, 0)$, $e_{2}= (0, 1)$, $e_3= (-1, 0)$ and $e_4= (0, -1)$. Let $\Phi$ be the action generated by $\Phi_{e_{1}}(x)= 1-x = \Phi_{e_{2}}(x)$ for all $x\in X$. One can observe that the action is expansive. Further, the action does not admit a generator. If possible, suppose $\alpha = \lbrace V_{1}, V_{2}, . . . V_{n} \rbrace$ is a generator. Note that $\oversl{V_{j}} = V_{j}$ for all $1\leq j\leq n$ and without loss of generality, we can assume that $V_{1}$ is infinite. Choose $U_{g} = V_{1}$ to show that, if $x\in V_{1}$ then $(1-x)\notin V_{1}$. Similarly, we can show that for every $2\leq j\leq n$ and every pair of distinct points $x,y\in V_{1}$, either $(1-x)\notin V_{j}$ or $(1-y)\notin V_{j}$, which is not possible. Thus $\Phi$ does not admit a generator. 
\label{3.6}
\end{Example}

\begin{Definition}
Let $\Phi\in Act(G,X)$ be commutative and $m$ be a positive integer. The $m$-times self composition of $\Phi$ is the action $\Phi^{m} :G\times X\rightarrow X$ given by $\Phi^{m}_{g}(x)=\Phi_{g}\Phi_{g}...\Phi_{g}(x)$. The inverse of $\Phi$ is the action $\Phi^{-1}:G\times X\rightarrow X$ given by $(\Phi^{-1})_{g}(x)=\Phi_{g^{-1}}(x)$. 
\label{3.7} 
\end{Definition}  

\begin{theorem}
Let $\Phi\in Act(G,X)$. Then,  
\begin{enumerate}
\item [(i)] $\Phi$ is expansive if and only if $\Phi^{-1}$ is expansive.
\item [(ii)] If $\Phi^{m}$ is expansive for some $m\in\mathbb{Z}\setminus\lbrace 0\rbrace$, then $\Phi$ is expansive. 
\item [(iii)] If $\Phi$ is commutative expansive action, then $\Phi^m$ is expansive for all $m\in \mathbb{Z}\setminus\lbrace 0\rbrace$.  
\end{enumerate}
\label{3.9} 
\end{theorem}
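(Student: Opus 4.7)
The plan is to dispatch (i) and (ii) as essentially bookkeeping and spend the real effort on (iii).

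For (i), I would check that one and the same expansive constant $\mathfrak{e}$ works for both $\Phi$ and $\Phi^{-1}$. If $\Phi$ is expansive with constant $\mathfrak{e}$ and $x \neq y$, pick $g \in G$ with $d(\Phi_g(x), \Phi_g(y)) > \mathfrak{e}$; rewriting this as $d((\Phi^{-1})_{g^{-1}}(x), (\Phi^{-1})_{g^{-1}}(y)) > \mathfrak{e}$ and using that $g \mapsto g^{-1}$ is a bijection of $G$ proves $\Phi^{-1}$ is expansive. The converse is identical since $(\Phi^{-1})^{-1} = \Phi$.

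For (ii), the case $|m|\geq 2$ implicitly requires $\Phi$ commutative so that $\Phi^m$ is actually an action, and the case $m=-1$ is covered by (i). Unwinding the definition, $(\Phi^m)_g = \Phi_{g^m}$. If $\mathfrak{e}'$ is an expansive constant for $\Phi^m$ and $x \neq y$, there exists $g \in G$ with $d(\Phi_{g^m}(x), \Phi_{g^m}(y)) > \mathfrak{e}'$; setting $h := g^m$ shows $\mathfrak{e}'$ is also an expansive constant for $\Phi$.

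For (iii), assume $\Phi$ is commutative and expansive with constant $\mathfrak{e}$, and fix $m \in \mathbb{Z}\setminus\{0\}$; by (i) I may assume $m > 0$. Commutativity of $\Phi$ forces the image $\Phi(G) \subseteq \mathrm{Homeo}(X)$ to be abelian, and being a quotient of the finitely generated group $G$ it is a finitely generated abelian group, so the structure theorem yields $\Phi(G) \cong \mathbb{Z}^k \oplus T$ with $T$ finite. In particular $m\Phi(G)$ has finite index in $\Phi(G)$. I would then pick a finite set $R = \{r_1, \dots, r_N\} \subseteq G$ such that $\{\Phi_{r_1}, \dots, \Phi_{r_N}\}$ is a transversal for $\Phi(G)/m\Phi(G)$. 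Using uniform continuity of the finitely many $\Phi_{r_i}$, choose $\delta > 0$ with $d(a,b) < \delta$ implying $d(\Phi_{r_i}(a), \Phi_{r_i}(b)) < \mathfrak{e}$ for every $i$. I claim $\delta$ is an expansive constant for $\Phi^m$: if $x \neq y$ and $d(\Phi_{g^m}(x), \Phi_{g^m}(y)) \leq \delta$ for every $g \in G$, then for any $h \in G$ we can factor $\Phi_h = \Phi_{r_i} \circ \Phi_{g_0^m}$ for some $g_0 \in G$, $r_i \in R$, giving $d(\Phi_h(x), \Phi_h(y)) < \mathfrak{e}$ for every $h$, contradicting expansivity of $\Phi$.

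The main obstacle will be the finite-transversal step in (iii): I must make explicit that even though $G$ itself need not be abelian, commutativity of the action forces $\Phi(G)$ to be abelian, and finite generation of $G$ forces finite generation of $\Phi(G)$, so the structure theorem and the ensuing finite-index statement $[\Phi(G):m\Phi(G)]<\infty$ apply. Once the transversal $R$ is in hand, the uniform-continuity step that converts the bound on $m$-th power orbits into a bound on the full orbit is routine.
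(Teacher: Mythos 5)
Your proof is correct. Parts (i) and (ii) match the paper, which likewise dismisses them as immediate from the identity $\Phi^{m}_{g}=\Phi_{g^{m}}$ (and your remark that the definition of $\Phi^{m}$ already presupposes commutativity for $|m|\geq 2$ is a fair reading of the paper's Definition~\ref{3.7}). For (iii) the underlying mechanism is the same in both arguments --- a finite set of ``correctors'' relating the full $\Phi$-orbit to the $\Phi^{m}$-orbit, controlled by uniform continuity over that finite set --- but the packaging differs. The paper argues in the forward direction: from $d(\Phi_{g}(x),\Phi_{g}(y))>\mathfrak{c}$ it writes $\Phi_{g}=\Phi_{s_{1}^{k_{1}m}}\cdots\Phi_{s_{n}^{k_{n}m}}\Phi_{s_{1}^{r_{1}}}\cdots\Phi_{s_{n}^{r_{n}}}$ and composes with the explicit corrector $s_{1}^{m-r_{1}}\cdots s_{n}^{m-r_{n}}$, which lies in the word-ball $G_{nm}$; a uniform $\gamma$ chosen over that finite ball then serves as the expansive constant for $\Phi^{m}$. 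You instead argue contrapositively and obtain the finite corrector set abstractly, as a transversal of $m\Phi(G)$ in $\Phi(G)$ via the structure theorem for finitely generated abelian groups. Your route gives a cleaner justification that the corrector set is finite (the paper's decomposition with $k_{i}\in\mathbb{N}\cup\lbrace 0\rbrace$ tacitly relies on the symmetry of $S$ to absorb signs), at the cost of invoking the structure theorem where an explicit word-length bound suffices; both are valid. The only detail to tidy is the strict-versus-nonstrict inequality at the end of (iii): negating expansivity of $\Phi^{m}$ with constant $\delta$ yields $d(\Phi_{g^{m}}(x),\Phi_{g^{m}}(y))\leq\delta$ for all $g$, so either declare $\delta/2$ to be the expansive constant or state the uniform-continuity implication with $\leq\delta$ in the hypothesis.
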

\begin{proof} (i) and (ii) are easy to verify using the definition and the fact that $\Phi^m_g(x)=\Phi_{g^m}(x)$ for all $g\in G$ and all $x\in X$. 
\medskip

(iii) Fix $m$, $i\in \mathbb{N} $ and $h\in G_{i}$. Since $\Phi_{h^{-1}}$ is uniformly continuous, there exists $\gamma^{h}_{i} >0$  such that $d(x,y)\geq \mathfrak{c}$ implies that $d(\Phi_{h}(x),\Phi_{h}(y)) = d(\Phi^{-1}_{h^{-1}}(x),\Phi^{-1}_{h^{-1}}(y)) > \gamma^{h}_{i}$. Since the cardinality of $G_{i}$ is finite, choose $\gamma_{i}=min_{h\in G_{i}}\lbrace \gamma^{h}_{i}\rbrace$ and set $\gamma= min_{i\in \lbrace 1, 2,..., nm\rbrace}\lbrace \gamma_{i}\rbrace$.
Thus $d(x,y)\geq \mathfrak{c}$ implies that $d(\Phi_{h}(x),\Phi_{h}(y))>\gamma$, for all $h\in G_{nm}$.
Let $x\neq y$ in $X$ and $g\in G$ such that $d(\Phi_{g}(x),\Phi_{g}(y)) > \mathfrak{c} $. 
Since $\Phi$ is commutative, we can write $\Phi_{g}= \Phi_{s^{ k_{1}m }_{1}}...\Phi_{s^{ k_{n}m }_{n}}\Phi_{s^{r_{1}}_{1}}...\Phi_{s^{r_{n}}_{n}}$, for some $k_{1},..., k_{n}\in \mathbb{N} \cup \lbrace 0 \rbrace$ and $r_{1},..., r_{n} \in \lbrace 0,...,m-1\rbrace$. As $s^{m-r_{1}}_{1}...s^{m-r_{n}}_{n}\in G_{nm}$, we have  
$d(\Phi_{s^{m-r_{1}}_{1}...s^{m-r_{n}}_{n}}\Phi_{g}(x),\Phi_{s^{m-r_{1}}_{1}...s^{m-r_{n}}_{n}}\Phi_{g}(y))>\gamma$ or $d(\Phi_{s^{(k_{1}+1)m}_{1}}$...$\Phi_{s^{(k_{n}+1)m}_{n}}(x),\Phi_{s^{(k_{1}+1)m}_{1}}$...
$\Phi_{s^{(k_{n}+1)m}_{n}}(y)) > \gamma$ or $d(\Phi^{m}_{s^{k_{1}+1}_{1}...s^{k_{n}+1}_{n}}(x),\Phi^{m}_{{s^{k_{1}+1}_{1}}...s^{k_{n}+1}_{n}}(y))
\\
>\gamma$. Hence, $\Phi^{m}$ is expansive with expansive constant $\gamma$.   Now, use part (i) to complete the proof.
\end{proof}
Recall that a point is said to be periodic under an action if the orbit of that point is finite. Since the set of all periodic points of any expansive homeomorphism on a compact metric space is at most countable, it is obvious to ask whether the set of all periodic points of any expansive action is countable or not. We give an affirmative answer for commutative action of infinite group.   

\begin{theorem}
The set of all periodic points of any commutative expansive action on a compact metric space is at most countable.  
\label{3.11}
\end{theorem}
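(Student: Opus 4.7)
The plan is to decompose $\mathrm{Per}(\Phi)$ as a countable union of finite sets, indexed by the finite-index subgroups of $G$. For a subgroup $H\leq G$, let
\[
F(H)=\{x\in X:\Phi_h(x)=x\text{ for every }h\in H\}.
\]
If $x\in\mathrm{Per}(\Phi)$, then its stabilizer $H_x=\{g\in G:\Phi_g(x)=x\}$ has index $|G\cdot x|<\infty$ and trivially $x\in F(H_x)$, so
\[
\mathrm{Per}(\Phi)=\bigcup_{H\leq G,\ [G:H]<\infty}F(H).
\]

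The heart of the argument is to prove that $F(H)$ is finite whenever $H$ has finite index. First, $F(H)=\bigcap_{h\in H}\{x\in X:\Phi_h(x)=x\}$ is a closed, and hence compact, subset of $X$. Since $\Phi$ is commutative, for any $x\in F(H)$, $g\in G$, $h\in H$ one has $\Phi_h(\Phi_g(x))=\Phi_g(\Phi_h(x))=\Phi_g(x)$, so $F(H)$ is $\Phi$-invariant and the restricted action descends to a well-defined action of the finite group $G/H$ on $F(H)$. This quotient action inherits expansivity with the same constant $\mathfrak{c}$. I would then invoke the following auxiliary lemma, whose proof is a short uniform-continuity argument: any expansive action of a finite group on a compact metric space has finite phase space. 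Indeed, for a finite group $F$ acting on a compact $Y$ with expansive constant $\mathfrak{c}$, uniform continuity of each $\Phi_g$ $(g\in F)$ produces $\delta_g>0$ such that $d(y_1,y_2)<\delta_g$ implies $d(\Phi_g(y_1),\Phi_g(y_2))\leq\mathfrak{c}$; taking $\delta=\min_{g\in F}\delta_g>0$ and invoking expansivity contrapositively shows any two distinct points of $Y$ lie at distance $\geq\delta$, so compactness forces $|Y|<\infty$. Applied to the $G/H$-action on $F(H)$, this gives $|F(H)|<\infty$.

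It remains to bound the number of finite-index subgroups of $G$. This is a classical fact for finitely generated groups: a subgroup of index $n$ is the stabilizer of a point in the transitive $G$-action on the coset space $G/H$, which in turn corresponds to a homomorphism $G\to S_n$; since such a homomorphism is determined by the images of the finitely many generators in the finite group $S_n$, there are only finitely many subgroups of any given finite index, and taking the union over $n\in\mathbb{N}$ yields a countable family. Hence $\mathrm{Per}(\Phi)$ is a countable union of finite sets, and therefore at most countable.

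I expect the main obstacle to be the finiteness of $F(H)$: commutativity is used crucially here to guarantee that the $G$-action restricts to $F(H)$ and factors through the finite quotient $G/H$, without which one cannot immediately reduce to an action of a finite group. The counting of finite-index subgroups and the uniform-continuity lemma for finite-group expansive actions are then routine.
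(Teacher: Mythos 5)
Your proof is correct, but it takes a genuinely different route from the paper's. The paper decomposes the periodic set using the power actions $\Phi^{m}$ of Definition \ref{3.7}: commutativity is used first to make each $\Phi^{m}$ expansive (Theorem \ref{3.9}(iii)), from which the fixed-point set $F(\Phi^{m})=\lbrace x:\Phi_{g^{m}}(x)=x \text{ for all } g\rbrace$ is separated and hence finite, and then again to write each $\Phi_{g}$ as a product of powers of the generators so that every periodic point lands in some $F(\Phi^{m})$; that converse step is also where the paper's argument needs the group to be infinite. You instead index the decomposition by finite-index subgroups (the stabilizer of a periodic point has finite index by orbit--stabilizer, and conversely), prove $|F(H)|<\infty$ by a direct uniform-continuity/separation argument over the finitely many cosets, and finish with the classical countability of finite-index subgroups of a finitely generated group. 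Your route buys a cleaner statement: no power actions, no case distinction on $O(G)$, and in fact commutativity becomes inessential --- the separation argument only needs that $\Phi_{gh}|_{F(H)}=\Phi_{g}|_{F(H)}$ for $h\in H$, which holds for any action, so running your uniform-continuity lemma over a set of coset representatives proves the theorem for arbitrary expansive actions. The one cosmetic blemish is the phrase ``action of $G/H$'': when $H$ is not normal, $G/H$ is not a group, so either pass to the normal core of $H_{x}$ (still of finite index and still fixing $x$) or, as just noted, argue directly with coset representatives; with the paper's notion of commutativity the stabilizers are in any case normal, so nothing is lost.
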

\begin{proof}
For $\Phi\in Act(G,X)$, set $F(\Phi) = \lbrace x\in X : \Phi_{g}(x) = x$ for all $g\in G\rbrace$. Then, for $m\in \lbrace j > 1 : O(G)$ does not divide $j\rbrace$, set $P(\Phi^m)=\lbrace F(\Phi^{m})\setminus \lbrace F(\Phi^{k}):1\leq k \leq (m-1)$ and $O(G)$ does not divide $k \rbrace$. Since $F(\Phi^{m})$ is a finite set for all $m > 0$ and $P(\Phi)=P(\Phi^m)\cup F(\Phi) \subset \cup_{m\in (\mathbb{N}\setminus \lbrace m : O(G)| m\rbrace} F(\Phi^{m})$, $P(\Phi)$ is atmost countable. To complete the proof, it is enough to show that $x$ is a periodic point if and only if $x\in P(\Phi)$. To see, assume that $x\in P(\Phi^{m})$ for some $m > 0$. Then, $\Phi_{g^{m}}(x) = x$ for all $g\in G$ and hence $\Phi_{s^{m}}(x) = x$ for all $s\in S$. Denote the set $\mathbb{O}(x)=\lbrace \Phi_{s^{j_{1}}_{1}} \Phi_{s^{j_{2}}_{2}}$ ... $\Phi_{s^{j_{n}}_{n}}(x) : j_{i} \in \lbrace 0, 1, m-1 \rbrace$ for $1\leq i\leq n\rbrace$. For $g\in G$, $\Phi_{g}(x) = \Phi_{s^{k_{1}}_{1}} \Phi_{s^{k_{2}}_{2}}$ ... $\Phi_{s^{k_{n}}_{n}}(x) = \Phi_{s^{r_{1}}_{1}} \Phi_{s^{r_{2}}_{2}} ... \Phi_{s^{r_{n}}_{n}}(x)$, where $k_{i} = t_{i}m + r_{i}$ for some $t_{i}\geq 0$, $r_{i}\in \lbrace 0, 1, ... ,m-1 \rbrace$ and $1\leq i\leq n$. Thus, $\Phi_{g}(x)\in \mathbb{O}(x)$ which is a finite set. Hence the orbit of $x$, $\mathcal{O}(x)$ is finite and coincides with $\mathbb{O}(x)$. 
\medskip

Conversely, assume that $x\in X$ is a periodic point, thus the orbit of $x$ is a finite set. Since the order of the group is infinite, the orbit of $x$ is finite and $\Phi_{s}$ is a uniform equivalence, there exists $m\in\mathbb{N}^{+}$ such that $\Phi_{s^m}(x)=x$ for all $s\in S$. Now, for any $g\in G$, $x  = \Phi_{s^{m k_{1}}_{1}} \Phi_{s^{m k_{2}}_{2}}$ ... $\Phi_{s^{m k_{n}}_{n}}(x) = \Phi_{g^{m}}(x)$ for some $k_{i}\geq 0$ and for all $1\leq i\leq n$. Thus, $x\in P(\Phi^{m})\subset P(\Phi)$. 
\end{proof}  

\begin{theorem}
Let $\pi:Y\rightarrow X$ be a locally isometric covering map. Suppose that $\Phi\in Act(G,X)$, $\Psi\in Act(G,Y)$ satisfy $\pi\Psi = \Phi\pi$ i.e. $\pi\Psi_{g}(y) = \Phi_{g}\pi(y)$, for all $y\in Y$ and for all $g\in G$. Suppose that there exists $\delta_{0} >0$ such that for each $y\in Y$ and $0<\delta <\delta_{0}$, $\pi : U_{\delta}(y)\rightarrow U_{\delta}(\pi(y))$ is an isometry. Then 
\begin{enumerate}
\item[(i)] $\Phi$ is expansive if and only if $\Psi$ is expansive.
\item[(ii)] $\Phi$ has the shadowing property if and only if $\Psi$ has the shadowing property.
\end{enumerate}
\label{3.13} 
\end{theorem}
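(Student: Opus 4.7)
The plan is to exploit the local isometry on $\delta_0$-balls to turn the topological covering data into genuine metric control. Two facts drive everything: within any $U_{\delta_0}(y)\subset Y$ the distance agrees with the $X$-distance of the $\pi$-image, and two distinct preimages of the same point in $X$ must be at least $\delta_0$ apart in $Y$ (otherwise both would lie in some $U_{\delta_0}(y)$ where $\pi$ is injective).

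For (i), if $\Phi$ is expansive with constant $\mathfrak{e}$, I would take $\mathfrak{e}'=\tfrac{1}{2}\min(\mathfrak{e},\delta_0)$ as a candidate for $\Psi$. For distinct $y_1,y_2\in Y$ with $\pi(y_1)=\pi(y_2)$, the separation fact already gives $d_Y(y_1,y_2)\geq\delta_0>\mathfrak{e}'$ at $g=e$; when $\pi(y_1)\neq\pi(y_2)$, expansivity of $\Phi$ supplies $g$ with $d_X(\Phi_g\pi(y_1),\Phi_g\pi(y_2))>\mathfrak{e}$, and the supposition $d_Y(\Psi_g(y_1),\Psi_g(y_2))\leq\mathfrak{e}'<\delta_0$ would, via the local isometry on $U_{\delta_0}(\Psi_g(y_1))$, force the $X$-distance to coincide with this quantity, a contradiction. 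For the converse direction, I would use uniform continuity of each $\Psi_s$ ($s\in S$) to obtain $\eta>0$ with $d_Y(a,b)<\eta\Rightarrow d_Y(\Psi_s(a),\Psi_s(b))<\delta_0$, set $\mathfrak{e}''=\tfrac{1}{2}\min(\mathfrak{e},\eta,\delta_0)$, and argue by contradiction: distinct $x_1,x_2\in X$ that are $\mathfrak{e}''$-non-expanded lift to $\tilde y_1\in\pi^{-1}(x_1)$ together with the unique $\tilde y_2\in U_{\delta_0}(\tilde y_1)$ with $\pi(\tilde y_2)=x_2$, and an induction on $L(g)$ shows $d_Y(\Psi_g(\tilde y_1),\Psi_g(\tilde y_2))\leq\mathfrak{e}''$ for every $g$ (once the distance is $<\delta_0$ the local isometry equates it with the $X$-distance, and the choice of $\eta$ propagates $<\delta_0$ to the next generation under any $\Psi_s$). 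This contradicts expansivity of $\Psi$.

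For (ii), the direction $\Phi$-shadowing $\Rightarrow\Psi$-shadowing follows the same template. Given $\epsilon<\delta_0$, pick $\eta$ as before, set $\epsilon'=\min(\epsilon,\eta)$, apply shadowing of $\Phi$ at level $\epsilon'$ to obtain $\delta'$, and take $\delta=\tfrac{1}{2}\min(\delta',\delta_0)$. A $\delta$-pseudo orbit $\tilde f$ of $\Psi$ projects to a $\delta$-pseudo orbit of $\Phi$ (consecutive values lie in a common $\delta_0$-ball, so the local isometry gives $d_X=d_Y<\delta$). An $\epsilon'$-tracing point $x\in X$ for $\pi\tilde f$ then lifts uniquely to $\tilde x\in U_{\delta_0}(\tilde f(e))$, and an induction on $L(g)$ as above gives $d_Y(\Psi_g(\tilde x),\tilde f(g))<\epsilon$ for every $g$.

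The hard part will be the converse $\Psi$-shadowing $\Rightarrow\Phi$-shadowing, which asks to lift a $\delta$-pseudo orbit $f:G\to X$ of $\Phi$ to a pseudo orbit $\tilde f:G\to Y$ of $\Psi$ with $\pi\tilde f=f$. I would choose a spanning tree of the Cayley graph of $G$ rooted at $e$, fix $\tilde f(e)\in\pi^{-1}(f(e))$, and along each tree edge $(g,sg)$ set $\tilde f(sg)$ to be the unique lift of $f(sg)$ inside $U_{\delta_0}(\Psi_s\tilde f(g))$; this lift exists because $d_X(\Phi_s f(g),f(sg))<\delta<\delta_0$, and the pseudo-orbit condition on tree edges is then automatic from the local isometry. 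The delicate point, and the main obstacle, is the pseudo-orbit condition on non-tree edges $(g,sg)$: one must verify that $\Psi_s\tilde f(g)$ and $\tilde f(sg)$ end up in a common $\delta_0$-ball so that the local isometry converts the $X$-control $d_X(\Phi_s f(g),f(sg))<\delta$ into $Y$-control; this is a discrete monodromy statement that relies on $\Psi$ respecting the relations of $G$ and on the accumulated defect along the Cayley loop closing the non-tree edge remaining below $\delta_0$. Once the lift is verified, applying shadowing of $\Psi$ to $\tilde f$ yields $\tilde x\in Y$ with $d_Y(\Psi_g(\tilde x),\tilde f(g))<\epsilon$, and the local isometry gives $d_X(\Phi_g(\pi\tilde x),f(g))<\epsilon$, showing that $\pi\tilde x$ $\epsilon$-traces $f$.
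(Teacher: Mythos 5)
Your treatment of part (i) and of the forward implication in part (ii) is correct and follows essentially the same route as the paper: transfer metric information through the $\delta_{0}$-balls on which $\pi$ is an isometry, handle the fibre direction via the observation that distinct points of a fibre are at least $\delta_{0}$ apart, and propagate closeness over all of $G$ by induction on word length using uniform continuity of the generators. In the forward half of (ii) you are in fact more careful than the paper, which picks an arbitrary $y\in\pi^{-1}(x)$ for the tracing point $x$ and passes from $d(\pi\Psi_{g}(y),\pi f(g))<\epsilon$ to $p(\Psi_{g}(y),f(g))<\epsilon$ without justifying that $f(g)$ lies in the sheet over $U_{\delta_{0}}(\Psi_{g}(y))$; your induction on $L(g)$ is exactly what is needed to make that step honest.

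The genuine gap is the one you flag yourself: in the implication ($\Psi$ has shadowing $\Rightarrow$ $\Phi$ has shadowing) you never establish that your lift $\tilde f$ satisfies the pseudo-orbit condition on the non-tree edges of the Cayley graph. This is not a removable formality. The two candidate lifts of $f(sg)$ --- the one propagated along the tree path to $sg$ and the one sitting in $U_{\delta_{0}}(\Psi_{s}\tilde f(g))$ --- either coincide or are at distance at least $\delta_{0}$, and ruling out the second alternative requires controlling the accumulated drift around the Cayley loop that closes the non-tree edge. Such loops have unbounded length, and each application of a generator may expand distances, so no single choice of $\delta$ controls all of them; the argument as sketched does not close. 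You should also be aware that the paper's own proof does not resolve this either: it takes an arbitrary set-theoretic lift $t$ with $\pi t=f$ and asserts $p(\Psi_{s}(t(g)),t(sg))=d(\pi\Psi_{s}(t(g)),\pi t(sg))<\delta$, which presupposes $t(sg)\in U_{\delta_{0}}(\Psi_{s}(t(g)))$ --- precisely the property an arbitrary lift need not have. So you have correctly diagnosed the weak point, but your submission, like the paper's proof, leaves the converse of (ii) unproved at that step; completing it requires a genuine coherence (monodromy) lemma for lifts of pseudo-orbits, or an additional hypothesis, which neither text supplies.
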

\begin{proof}
(i) Suppose $\Phi$ is expansive with expansive constant $\mathfrak{c}$. We claim that, $0< \gamma<$ min$\lbrace \mathfrak{c}, \delta_{0}\rbrace$ is an expansive constant for $\Psi$. Let $y_{1}, y_{2}\in Y$ such that $p(\Psi_{g}(y_{1}),\Psi_{g}(y_{2}))< \gamma$, for all $g\in G$. Therefore, $\Psi_{g}(y_{2})\in U_{\gamma}(\Psi_{g}(y_{1}))\subset U_{\delta_{0}}(\Psi_{g}(y_{1}))$, for all $g\in G$ which implies that $d(\pi\Psi_{g}(y_{1}), \pi\Psi_{g}(y_{2}))< \gamma$, for all $g\in G$. Hence, $d(\Phi_{g}\pi(y_{1}), \Phi_{g}\pi(y_{2}))< \gamma$, for all $g\in G$. Since $\gamma$ is expansive constant for $\Phi$, we get that $\pi(y_{1}) = \pi(y_{2})$. In particular, for $g=e$ the identity element of $G$, $y_{2}\in U_{\gamma}(y_{1})\subset U_{\delta_{0}}(y_{1})$ and since $\pi$ is an isometry on $U_{\gamma}(y_{1})$, we get that $0= d(\pi(y_{1}),\pi(y_{2})) = p(y_{1}, y_{2})$ or $y_{1} = y_{2}$. Hence, $\Psi$ is expansive with expansive constant $\gamma$. 
\medskip

Conversely, suppose that $\Psi$ is expansive with expansive constant $\mathfrak{c}$. Since $\Psi$ is uniformly continuous, we can choose $0< \delta< \delta_{0}$ such that $p(y_{1},y_{2}) < \delta$ implies that $p(\Psi_{s}(y_{1}),\Psi_{s}(y_{2})) < \delta_{0} $, for all $s\in S$. Let $0< \gamma <$ min$\lbrace \mathfrak{c}, \delta\rbrace$ and $x_{1}, x_{2}\in X$ such that $d(\Phi_{g}(x_{1}), \Phi_{g}(x_{2}))< \gamma$, for all $g\in G$. Choose, $y_{1}, y_{2}\in Y$ with $\pi(y_{1})=x_{1}$, $\pi(y_{2})=x_{2}$ and $p(y_{1}, y_{2})= d(x_{1}, x_{2})< \gamma <\delta <\delta_{0}$. 
If $d(\Phi_{g}(x_{1}), \Phi_{g}(x_{2}))\leq \gamma$ for all $g\in G$, then $p(\Psi_{s}(y_{1}), \Psi_{s}(y_{2})) = d(\pi\Psi_{s}(y_{1}), \pi\Psi_{s}(y_{2})) = d(\Phi_{s}(x_{1}), \Phi_{s}(x_{2})) \leq \gamma$ for all $s\in G_{1}$. 
By induction on $n$, we can show that $p(\Psi_{g}(y_{1}), \Psi_{g}(y_{2})) \leq \gamma$ for all $g\in G_{n}$. Thus $p(\Psi_{g}(y_{1}), \Psi_{g}(y_{2})) \leq \gamma$ for all $g\in G$ and hence by expansivity of $\Psi$, we get that $y_{1} = y_{2}$ or $x_{1} = x_{2}$. Thus, $\Phi$ is expansive with expansivity constant $\gamma$
\medskip

(ii) Suppose that $\Phi$ has the shadowing property. For any $0< \epsilon\leq \delta_{0}$, the shadowing property of $\Phi$ gives $0< \delta <\epsilon$ such that every $\delta$-pseudo orbit can be $\epsilon$-traced. Let $f:G\rightarrow Y$ be a $\delta$-pseudo orbit for $\Psi$ i.e. $p(\Psi_{s}(f(g)), f(sg)) < \delta <\delta_{0}$, for all $s\in S$ and all $g\in G$. Therefore, $f(sg)\in U_{\delta_{0}}(\Psi_{s}f(g))$. Since $\pi$ is an isometry on $U_{\delta_{0}}(\Psi_{s}f(g))$, we get that
$d(\pi \Psi_{s}(f(g)), \pi f(sg)) = p(\Psi_{s}(f(g)), f(sg)) < \delta <\delta_{0}$ or $d(\Phi_{s}\pi f(g), \pi f(sg)) = p(\Psi_{s}(f(g)), f(sg)) < \delta <\delta_{0}$, for all $s\in S$ and all $g\in G$. So, $h:G\rightarrow X$ given by $h(g)= \pi f(g)$ is a $\delta$-pseudo orbit for $\Phi$ and hence, there exists $x\in X$ such that $d(\Phi_{g}(x),\pi f(g))< \epsilon \leq \delta_{0}$ or $d(\Phi_{g} \pi(y), \pi f(g))< \epsilon \leq \delta_{0}$ or $d(\pi \Psi_{g}(y), \pi f(g))< \epsilon \leq \delta_{0}$ for some $y\in Y$ and for all $g\in G$ which implies $p(\Psi_{g}(y), f({g}))< \epsilon$ for all $g\in G$. Hence $f(g)$ is $\epsilon$-traced by $y\in Y$. 
\medskip

Conversely, suppose that $\Psi$ has the shadowing property. For $0< \epsilon\leq \delta_{0}$, the shadowing property of $\Psi$ gives $0< \delta <\epsilon$ such that every $\delta$-pseudo orbit of $\Psi$ can be $\epsilon$-traced. Let $f:G\rightarrow X$ be a $\delta$-pseudo orbit for $\Phi$ i.e. $d(\Phi_{s}f(g), f(sg)) < \delta <\delta_{0}$, for all $s\in S$ and all $g\in G$. Since $\pi$ is onto map, there exists $t:G\rightarrow Y$ such that $\pi t(g) = f(g)$ and $d(\Phi_{s}\pi (t(g)), \pi t(sg))< \delta$ or $d(\pi \Psi_{s}(t(g)), \pi t(sg))< \delta$. Since $\pi$ is an isometry on $U_{\delta_{0}}(\Psi_{s} (t(g)))$, we get that $d(\pi \Psi_{s}(t(g)), \pi t(sg)) = p(\Psi_{s}(t(g)), t(sg)) < \delta <\delta_{0}$. So, $t$ is a $\delta$-pseudo orbit of $\Psi$. Then, there exists $y\in Y$ such that $p(\Psi_{g}(y), t(g))< \epsilon \leq \delta_{0}$ which implies that
$d(\pi \Psi_{g}(y), \pi t(g))< \epsilon \leq \delta_{0}$, for some $y\in Y$ or $d(\Phi_{g}\pi (y), \pi t(g))< \epsilon \leq \delta_{0}$ or $d(\Phi_{g}(x), f(g))< \epsilon$, for all $g\in G$. Hence $f$ is $\epsilon$-traced by $x\in X$.  
\end{proof}

\section{Gromov-Hausdorff Stability} 

In \cite{A3}, authors defined the Gromov-Hausdorff distance between $(X,d_X)$ and $(Y,d_Y)$ by $d_{GH}(X, Y) =$ inf$\lbrace \delta > 0 \mid $ there exists $\delta$-isometries $i: X\rightarrow Y$ and $j: Y\rightarrow X\rbrace$.  
\medskip

For a finite symmetric generating set $S$ of $G$ and $\Phi,\Psi\in Act(G, X)$, the map $d_{S}^{X}(\Phi, \Psi) =$ sup$_{(s, x)\in S\times X}d^{X}(\Phi_{s}(x), \Psi_{s}(x))$ is a metric. 
\medskip

For non-necessarily continuous map $i:X\rightarrow Y$, $\Phi\in Act(G, X)$ and $\Psi\in Act(G, Y)$, we define 
$d_{S}^{Y}(\Psi\circ i,i\circ \Phi) =$ sup$_{(s, x)\in S\times X}d^{Y}(\Psi_{s}(i(x)), i(\Phi_{s}(x)))$.  
\medskip

The $G$-Gromov-Hausdorff distance between $\Phi\in Act(G,X)$ and $\Psi\in Act(G,Y)$ with respect to the generating set $S$ is defined as 
\begin{center}
$d_{GH}^{S}(\Phi, \Psi) =$ inf$\lbrace \delta > 0\mid$ there exists $\delta-$isometries $i: X\rightarrow Y$ and $j: Y\rightarrow X$ such that $d_{S}^Y(\Psi\circ i, i\circ \Phi) < \delta$ and $d_{S}^X(\Phi\circ j, j\circ \Psi) < \delta \rbrace$.   
\end{center} 
\medskip

The strong $G$-Gromov-Hausdorff distance between $\Phi\in Act(G, X)$ and $\Psi\in Act(G, Y)$ with respect to the generating set $S$ is defined as  
\begin{center}
$\overline{d}_{GH}^{S}(\Phi, \Psi) =$ inf$\lbrace \delta > 0\mid$ there exists $\delta-$isometry $i: Y\rightarrow X$ such that $d_{S}^X(\Phi\circ i, i\circ \Psi)<\delta\rbrace$.  
\end{center} 
\medskip

A function $d : X\times X\rightarrow \mathbb{R}$  is said to be a pseudo-quasimetric on $X$, if for every triplet $(x, y, z)$ of $X\times X\times X$ following conditions hold 
\begin{enumerate}
\item [(i)] $d(x, y) \geq 0$ and $d(x, x) = 0$;
\item [(ii)] $d(x, y) = d(y, x)$;
\item [(iii)] There exists a constant $k\geq 1$ such that $d(x, y)\leq k[d(x, z) + d(z,y)]$, where $k$ is called the coefficient of pseudo-quasi metric. 
\end{enumerate}

If only (i) and (iii) hold, then $d$ is called an asymmetric pseudo-quasi metric on $X$.  

\begin{theorem}
Let $S$ be a generating set of $G$. For $\Phi\in Act(G, X)$ and $\Psi\in Act(G, Y)$ following statements are true
\begin{enumerate} 
\item [(i)] $d_{GH}^{S}(\Phi, \Psi) = $max $\lbrace \overline{d}_{GH}^{S}(\Phi, \Psi), \overline{d}_{GH}^{S}(\Psi, \Phi)\rbrace$ and if $X = Y$, then $d_{GH}^{S}(\Phi, \Psi) \leq d_{S}(\Phi, \Psi)$;
\item [(ii)] $d_{GH}(X, Y) \leq d_{GH}^{S}(\Phi, \Psi)$ and $d_{GH}(X, Y) = d_{GH}^{S}(Id_{X}, Id_{Y})$, where $Id_{X}, Id_{Y}$ denote the trivial actions on $X$ and $Y$ respectively; 
\item [(iii)] $d_{GH}^{S}(\Phi, \Psi) \geq 0$ and $d_{GH}^{S}(\Phi, \Phi) = 0$;
\item [(iv)] $d_{GH}^{S}(\Phi, \Psi) = d_{GH}^{S}(\Psi, \Phi)$;
\item [(v)] If $\Lambda \in Act(G, Z)$, then $d_{GH}^{S}(\Phi, \Psi)\leq 2[d_{GH}^{S}(\Phi, \Lambda) + d_{GH}^{S}(\Lambda,\Psi)]$.
\item [(vi)] $\overline{d}_{GH}^{S}(\Phi, \Psi) \geq 0$ and $\overline{d}_{GH}^{S}(\Phi, \Psi) = 0$;
\item [(vii)] If $\Lambda \in Act(G, Z)$, then $\overline{d}_{GH}^{S}(\Phi, \Psi)\leq 2[\overline{d}_{GH}^{S}(\Phi, \Lambda) + \overline{d}_{GH}^{S}(\Lambda,\Psi)]$. 
\label{6.1}
\end{enumerate}
\end{theorem}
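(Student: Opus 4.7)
The plan is to reduce every clause to elementary manipulations of the defining conditions of $d_{GH}^{S}$ and $\overline{d}_{GH}^{S}$. Throughout, for any $\delta$ strictly larger than the relevant infimum I fix witnesses (a $\delta$-isometry, or a pair of them, together with the intertwining bounds) and let $\delta$ tend to the infimum at the end.

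Parts (i), (iii), (iv), and the first half of (vi) are purely formal. The definition of $d_{GH}^{S}(\Phi,\Psi)$ simultaneously requires an $i\colon X\to Y$ satisfying the intertwining condition that appears in $\overline{d}_{GH}^{S}(\Psi,\Phi)$ and a $j\colon Y\to X$ satisfying the condition that appears in $\overline{d}_{GH}^{S}(\Phi,\Psi)$, so (i) is immediate by matching witnesses on each side, and the symmetry (iv) follows at once from the symmetric role of $(i,j)$ in the definition. Non-negativity is obvious, and both $d_{GH}^{S}(\Phi,\Phi)=0$ and $\overline{d}_{GH}^{S}(\Phi,\Phi)=0$ are witnessed by $i=j=Id_{X}$, a $0$-isometry that trivially intertwines $\Phi$ with itself; when $X=Y$ this same choice gives the second half of (i), since the two intertwining bounds then collapse to $d_{S}(\Phi,\Psi)$. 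For (ii), any witness of $d_{GH}^{S}(\Phi,\Psi)$ is in particular a pair of $\delta$-isometries between $X$ and $Y$, proving $d_{GH}(X,Y)\leq d_{GH}^{S}(\Phi,\Psi)$; and for trivial actions both intertwining quantities $d_{S}^{Y}(Id_{Y}\circ i,i\circ Id_{X})$ and $d_{S}^{X}(Id_{X}\circ j,j\circ Id_{Y})$ vanish, so $d_{GH}^{S}(Id_{X},Id_{Y})$ reduces exactly to $d_{GH}(X,Y)$.

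The substantive work lies in the quasi-triangle inequalities (v) and (vii). Fix $\delta>d_{GH}^{S}(\Phi,\Lambda)$ with witnesses $i_{1}\colon X\to Z$, $j_{1}\colon Z\to X$ and $\eta>d_{GH}^{S}(\Lambda,\Psi)$ with witnesses $i_{2}\colon Z\to Y$, $j_{2}\colon Y\to Z$, and propose $i_{2}\circ i_{1}\colon X\to Y$ and $j_{1}\circ j_{2}\colon Y\to X$ as witnesses for $d_{GH}^{S}(\Phi,\Psi)$. A short calculation will show that the composition of a $\delta$-isometry with an $\eta$-isometry is a $(\delta+2\eta)$-isometry, where the extra $\eta$ comes from having to pull a Hausdorff-density approximation back through the outer map; in particular the composite distortion is at most $2(\delta+\eta)$. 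For the intertwining estimate, I will insert the middle action as $i_{2}(\Lambda_{s}(i_{1}(x)))$ and apply the near-isometry of $i_{2}$ together with the two hypotheses $d_{S}^{Y}(\Psi\circ i_{2},i_{2}\circ\Lambda)<\eta$ and $d_{S}^{Z}(\Lambda\circ i_{1},i_{1}\circ\Phi)<\delta$, yielding the bound $\eta+(\delta+\eta)=\delta+2\eta\leq 2(\delta+\eta)$. The corresponding estimate for $j_{1}\circ j_{2}$ is symmetric, so letting $\delta,\eta$ approach the infima gives (v); applying the same argument to the single map appearing in each $\overline{d}_{GH}^{S}$ gives (vii).

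The main obstacle is simply organizing the composition bookkeeping in (v) and (vii): three sources of error (Hausdorff density, metric distortion, action intertwining) must be combined across two composed maps, and it is precisely this interaction that forces the factor $2$ and shows that $d_{GH}^{S}$ and $\overline{d}_{GH}^{S}$ are pseudo-quasi metrics (with coefficient $2$) in the sense defined just above the theorem, rather than honest metrics.
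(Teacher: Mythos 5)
Your proposal is correct and follows essentially the same route as the paper: parts (i)--(iv) and (vi) by direct inspection of the definitions, and (v), (vii) by composing the witnessing near-isometries and inserting the intermediate action $\Lambda$ into the intertwining estimate, with the factor $2$ absorbing the accumulated errors. Your accounting is in fact slightly more careful than the paper's on one point — the composite of a $\delta$-isometry with an $\eta$-isometry is a $(\delta+2\eta)$-isometry rather than a $(\delta+\eta)$-isometry because of the Hausdorff-density term, but since $\delta+2\eta\leq 2(\delta+\eta)$ this does not change the conclusion.
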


\begin{proof}
Proof of (i), (ii), (iii), (iv) and (vi) follow from the corresponding definitions. The proof of (vii) is similar to the proof of (v). So, we prove only (v). For fixed $\epsilon > 0$, we can choose $\delta_{1}$-isometries $i : X\rightarrow Z$, $j: Z\rightarrow X$ and $\delta_{2}$-isometries $k: Y\rightarrow Z$, $l : Z\rightarrow Y$ such that $\delta_{1} < d_{GH}^{S}(\Phi, \Lambda) + \epsilon$, $\delta_{2} < d_{GH}^{S}(\Lambda, \Psi) + \epsilon$, $d_{S}^Z(\Lambda\circ i, i\circ \Phi) < \delta_{1}$, $d_{S}^X(j\circ \Lambda, \Phi\circ j) < \delta_{1}$, $d_{S}^Z(\Lambda\circ k, k\circ \Psi) < \delta_{2}$, $d_{S}^Y(l\circ \Lambda, \Psi\circ l) < \delta_{2}$. Clearly, $l\circ i: X\rightarrow Y$ and $j\circ k: Y\rightarrow X$ are $(\delta_{1}+\delta_{2})$-isometries and hence, $2(\delta_{1}+\delta_{2})$-isometries as well. For all $x\in X$, $s\in S$, we have 
\begin{align*} 
&d^{Y}(\Psi_{s}(l(i(x))), l(i(\Phi_{s}(x))))\\
&\leq d^{Y}(\Psi_{s}(l(i(x))), l(\Lambda_{s}(i(x)))) + d^{Y}(l(\Lambda_{s}(i(x))), l(i(\Phi_{s}(x)))) \\
&< \delta_{2} + d^{Z}(\Lambda_{s}(i(x)), i(\Phi_{s}(x))) + \delta_{2}\\ 
&< \delta_{1} + 2\delta_{2}< 2(\delta_{1} + \delta_{2}).  
\end{align*} 

Hence, $d_{S}^Y(\Psi\circ (l\circ i), (l\circ i)\circ \Phi) < 2(\delta_{1} + \delta_{2})$. Similarly, one can prove that $d_{S}^X((j\circ k) \circ \Psi, \Phi\circ (j\circ k)) < 2(\delta_{1}+\delta_{2})$. Therefore, $d_{GH}^{S}(\Phi, \Psi) \leq 2(\delta_{1}+\delta_{2}) < 2(d_{GH}^{S}(\Phi, \Lambda) + d_{GH}^{S}(\Lambda, \Psi) + 2\epsilon)$. Since $\epsilon$ was chosen arbitrarily, we get the result. 
\end{proof}

Items (iii), (iv) and (v) imply that $d_{GH}^{S}$ forms a pseudo-quasimetric and items (vi), (vii) imply that the $\overline{d}_{GH}^{S}$ forms an asymmetric pseudo-quasimetric. 
\medskip 

In this section, without loss of generality we assume that $0 < \epsilon, \delta, \eta, \gamma, \mathfrak{c} < \frac{1}{2}$. 

\begin{Definition}
An action $\Phi\in Act(G, X)$ is said to be strongly $GH$-stable (resp. $GH$-stable) with respect to the generating set $S$ of $G$ if for every $\epsilon > 0$ there exists $\delta > 0$ such that for every action $\Psi\in Act(G, Y)$ satisfying $\overline{d}_{GH}^{S}(\Phi, \Psi) < \delta$ (resp. $d_{GH}^{S}(\Phi, \Psi) < \delta$) there exists a continuous $\epsilon$-isometry $k: Y\rightarrow X$ such that $\Phi\circ k = k\circ \Psi$.   
\label{6.2} 
\end{Definition}

The following result shows that strong $GH$-stability is independent of the generating set. 

\begin{theorem}
Let $S$ and $P$ be two finite symmetric generating sets of $G$. If $\Phi\in Act(G, X)$, then $\Phi$ is strongly $GH$-stable with respect to $S$ if and only if $\Phi$ is strongly $GH$-stable with respect to $P$.  
\label{6.3}
\end{theorem}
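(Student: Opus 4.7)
The plan is to show that for the two generating sets $S$ and $P$, the conditions $\overline{d}_{GH}^{S}(\Phi,\Psi)<\delta$ and $\overline{d}_{GH}^{P}(\Phi,\Psi)<\delta'$ can be converted into one another for suitable $\delta,\delta'>0$, so the existence of the continuous $\epsilon$-isometry $k$ required by Definition \ref{6.2} transfers from one generating set to the other. Since the definition of strong $GH$-stability is symmetric in $S$ and $P$, it suffices to prove one implication: assuming $\Phi$ is strongly $GH$-stable with respect to $S$, show it is strongly $GH$-stable with respect to $P$.

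First I would invoke bi-Lipschitz equivalence of word norms to fix a constant $C\geq 1$ such that every $s\in S$ admits a factorisation $s=p_{1}p_{2}\cdots p_{k}$ with $p_{j}\in P$ and $k\leq C$; in particular every such product lies in $G_{C}$ (in the $P$-norm), and $G_{C-1}$ is finite. Given $\epsilon>0$, I would pull out the $\delta_{S}>0$ guaranteed by strong $GH$-stability with respect to $S$. Then, using that each $\Phi_{h}$ for $h\in G_{C-1}$ is uniformly continuous and $G_{C-1}$ is finite, I would choose $\delta_{P}>0$ with $\delta_{P}<\delta_{S}$ so small that $d^{X}(a,b)<\delta_{P}$ forces $d^{X}(\Phi_{h}(a),\Phi_{h}(b))<\delta_{S}/(C+1)$ simultaneously for every $h\in G_{C-1}$.

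The crucial step is the telescoping estimate. Suppose $\overline{d}_{GH}^{P}(\Phi,\Psi)<\delta_{P}$, so there is a $\delta_{P}$-isometry $i:Y\to X$ with $d_{P}^{X}(\Phi\circ i,i\circ\Psi)<\delta_{P}$; since $\delta_{P}<\delta_{S}$, the same $i$ is automatically a $\delta_{S}$-isometry. For $s=p_{1}\cdots p_{k}\in S$ and $y\in Y$, write $y_{j}=\Psi_{p_{j+1}\cdots p_{k}}(y)$ and insert the intermediate points $\Phi_{p_{1}\cdots p_{j-1}}(\Phi_{p_{j}}(i(y_{j})))$ and $\Phi_{p_{1}\cdots p_{j-1}}(i(\Psi_{p_{j}}(y_{j})))$. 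Each consecutive pair differs by applying $\Phi_{p_{1}\cdots p_{j-1}}\in G_{C-1}$ to two points at distance at most $d_{P}^{X}(\Phi\circ i,i\circ\Psi)<\delta_{P}$, which by the choice of $\delta_{P}$ is bounded by $\delta_{S}/(C+1)$. Summing the $k\leq C$ contributions gives $d^{X}(\Phi_{s}(i(y)),i(\Psi_{s}(y)))\leq C\delta_{S}/(C+1)<\delta_{S}$. Hence $\overline{d}_{GH}^{S}(\Phi,\Psi)<\delta_{S}$, and the strong $GH$-stability of $\Phi$ with respect to $S$ produces the desired continuous $\epsilon$-isometry $k:Y\to X$ with $\Phi\circ k=k\circ\Psi$. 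The reverse implication is identical with the roles of $S$ and $P$ swapped.

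The main obstacle is essentially bookkeeping: confirming that the telescoping bound uses only uniform continuity moduli of $\Phi_{h}$ with $h$ ranging over the fixed finite set $G_{C-1}$, so the choice of $\delta_{P}$ can be made independent of the (a priori unknown) $\Psi$. The mechanism is the same as in the proof of Theorem \ref{4.7}; once $C$ and the uniform-continuity modulus are fixed, the estimate is automatic.
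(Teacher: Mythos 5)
Your proposal is correct and follows essentially the same route as the paper: decompose each $s\in S$ as a bounded-length word in $P$, use uniform continuity of the finitely many $\Phi_h$ with $h$ of bounded $P$-length to set up the modulus, and telescope to convert $d_P^X(\Phi\circ i,i\circ\Psi)<\delta_P$ into $d_S^X(\Phi\circ i,i\circ\Psi)<\delta_S$. The only cosmetic difference is that you bound word lengths by the bi-Lipschitz constant $C$ where the paper uses $m=\max_{s\in S}d_P(s)$ directly.
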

\begin{proof}
Suppose that $\Phi$ is strongly $GH$-stable with respect to the generating set $S$. Then, for every $\epsilon > 0$ there exists $\eta> 0$ such that for each $\Psi\in Act(G, Y)$ satisfying $\overline{d}_{GH}^{S}(\Phi, \Psi) < \eta$ there exists a continuous $\epsilon$-isometry $k: Y\rightarrow X$ such that $\Phi\circ k = k\circ \Psi$. Set $m = $max$_{s\in S}d_{P}(s)$, where $d_{P}$ is the word length metric on $G$ induced by $P$. Choose $\gamma > 0$ such that $m\gamma < \eta$. Fix $\delta > 0$ such that $d^{X}(x_{1}, x_{2}) < \delta$ implies $d^{X}(\Phi_{g}(x_{1}), \Phi_{g}(x_{2})) < \gamma$ for all $g\in G_{m}^{P}$, where $G_{m}^{P}$ is the set of all elements of length at most $m$ with respect to the generating set $P$.  
\medskip

For any $s\in S$, we can write $s = p_{1}p_{2}...p_{d_{P}(s)}$, where $d_{P}(s) \leq m$ and $p_{i}\in P$ for all $1\leq i\leq d_{P}(s)$. Let $\Psi\in Act(G, Y)$ with $\overline{d}_{GH}^{P}(\Phi, \Psi) < \delta$. Thus, there exists $\delta$-isometry $i:Y\rightarrow X$ such that $d_{P}^X(\Phi\circ i, i\circ \Psi)< \delta$. Since every $\delta$-isometry is $\eta$-isometry, it is enough to show that $d_{S}^X(\Phi\circ i, i\circ \Psi)< \eta$. Note that for all $y\in Y$, $s\in S$ following holds.
\medskip
 
$
d^{X}(\Phi_{s}\circ j(y), j\circ \Psi_{s}(y))\\
= d^{X}(\Phi_{p_{1}...p_{d_{P}(s)}}\circ j(y), j\circ \Psi_{p_{1}...p_{d_{P}(s)}}(y)) \\
\leq d^{X}(\Phi_{p_{1}...p_{d_{P}(s)-1}}(\Phi_{p_{d_{P}(s)}}\circ j(y)), \Phi_{p_{1}...p_{d_{P}(s)-1}}(j\circ \Psi_{p_{d_{P}(s)}}(y))) \\
+ d^{X}(\Phi_{p_{1}...p_{d_{P}(s)-2}}(\Phi_{p_{d_{P}(s)-1}}\circ j (\Psi_{p_{d_{P}(s)}}(y))),\Phi_{p_{1}...p_{d_{P}(s)-2}}(j\circ \Psi_{p_{d_{P}(s)-1}} (\Psi_{p_{d_{P}(s)}}(y))) \\
+ ... + d^{X}(\Phi_{p_{1}}\circ j (\Psi_{p_{2}...p_{d_{P}(s)}}(y)), j\circ \Psi_{p_{1}...p_{d_{P}(s)}}(y))< m\gamma< \eta$
\end{proof}

\begin{theorem}
Let $\Phi\in Act(G, X)$ be strongly $GH$-stable. If $\Psi\in Act(G, Z)$ is isometric to $\Phi$ via isometry $k : Z \rightarrow X$, then $\Psi$ is strongly $GH$-stable.  
\label{6.5}
\end{theorem}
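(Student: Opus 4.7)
The approach is to transfer strong $GH$-stability through the isometric conjugacy $k : Z \to X$, which by hypothesis is both a conjugacy and a bijective isometry, so that $k^{-1} : X \to Z$ shares both properties. Since isometries preserve the Hausdorff distance, the metric-distortion term in the definition of a $\delta$-isometry, and continuity, no loss in constants occurs during the transfer; the same $\delta$ produced by the hypothesis on $\Phi$ will suffice for $\Psi$.

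Given $\epsilon > 0$, I would first invoke the strong $GH$-stability of $\Phi$ to choose $\delta > 0$ with the corresponding property. For any $\Lambda \in Act(G, Y)$ satisfying $\overline{d}_{GH}^{S}(\Psi, \Lambda) < \delta$, pick a $\delta$-isometry $i : Y \to Z$ witnessing this, so that $d_{S}^{Z}(\Psi \circ i, i \circ \Lambda) < \delta$. The first key step is to show that $k \circ i : Y \to X$ realises $\overline{d}_{GH}^{S}(\Phi, \Lambda) < \delta$. Because $k$ is a bijective isometry, $d_{H}^{X}(k(i(Y)), X) = d_{H}^{Z}(i(Y), Z)$ and the distortion term transfers verbatim; moreover, using $\Phi_{s} \circ k = k \circ \Psi_{s}$,
\[
d^{X}(\Phi_{s}(k(i(y))), (k \circ i)(\Lambda_{s}(y))) = d^{Z}(\Psi_{s}(i(y)), i(\Lambda_{s}(y))),
\]
so $d_{S}^{X}(\Phi \circ (k \circ i), (k \circ i) \circ \Lambda) < \delta$ as required, hence $\overline{d}_{GH}^{S}(\Phi, \Lambda) < \delta$.

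Next, the strong $GH$-stability of $\Phi$ supplies a continuous $\epsilon$-isometry $h' : Y \to X$ with $\Phi \circ h' = h' \circ \Lambda$. The second key step is to set $h = k^{-1} \circ h' : Y \to Z$ and verify that $h$ is a continuous $\epsilon$-isometry conjugating $\Psi$ and $\Lambda$. Continuity and the $\epsilon$-isometry property follow from $k^{-1}$ being a continuous bijective isometry, while the intertwining is obtained by inverting the conjugacy identity to $\Psi \circ k^{-1} = k^{-1} \circ \Phi$ and then computing
\[
\Psi \circ h = \Psi \circ k^{-1} \circ h' = k^{-1} \circ \Phi \circ h' = k^{-1} \circ h' \circ \Lambda = h \circ \Lambda.
\]

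I do not anticipate any genuine obstacle: the theorem is essentially a functoriality statement asserting that strong $GH$-stability depends only on the isometric-conjugacy class of the action. The only care required is to track the directions of the maps so that the final $\epsilon$-isometry lands in $Z$ (the phase space of $\Psi$), which is achieved by postcomposing with $k^{-1}$ rather than $k$, and to note that the bounded-metric passage $d^{X} = \min\{d_{X}, 1\}$ commutes with composition by isometries so that no constant is lost at any stage.
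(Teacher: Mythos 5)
Your proposal is correct, and its overall strategy coincides with the paper's: reduce to the strong $GH$-stability of $\Phi$, obtain a continuous $\epsilon$-isometry $h':Y\to X$ with $\Phi\circ h'=h'\circ\Lambda$, and pull it back to $h=k^{-1}\circ h':Y\to Z$, which conjugates correctly since $\Psi\circ k^{-1}=k^{-1}\circ\Phi$. The one place you diverge is in showing that closeness of $\Lambda$ to $\Psi$ forces closeness to $\Phi$: the paper invokes the asymmetric pseudo-quasimetric inequality $\overline{d}_{GH}^{S}(\Phi,\Lambda)\leq 2[\overline{d}_{GH}^{S}(\Phi,\Psi)+\overline{d}_{GH}^{S}(\Psi,\Lambda)]$ together with $\overline{d}_{GH}^{S}(\Phi,\Psi)=0$, and accordingly must start from $\overline{d}_{GH}^{S}(\Psi,\Lambda)<\delta/2$; you instead compose the witnessing almost-isometry $i$ with the exact isometry $k$ and check directly that the Hausdorff, distortion, and intertwining terms are preserved verbatim, so the same $\delta$ survives with no factor of $2$. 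Your route is marginally sharper and self-contained, at the cost of redoing a computation the paper has already packaged as Theorem \ref{6.1}(vii). One pedantic point: since $\overline{d}_{GH}^{S}(\Psi,\Lambda)<\delta$ is an infimum condition, you should take the witness $i$ to be a $\delta'$-isometry for some $\delta'<\delta$ so that the transported estimate remains strict; this is cosmetic and does not affect the argument.
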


\begin{proof}
Let $\epsilon > 0$ be fixed and let $\delta > 0$ be given for $\epsilon$ by strong $GH$-stability of $\Phi$. Let $\Lambda\in Act(G, Y)$ such that $\overline{d}_{GH}^{S}(\Psi, \Lambda) < \frac{\delta}{2}$. By Theorem \ref{6.1}, $\overline{d}_{GH}^{S}(\Phi, \Lambda)\leq 2[\overline{d}_{GH}^{S}(\Phi, \Psi)$ $+ \overline{d}_{GH}^{S}(\Psi, \Lambda)] < 2[0 +\frac{\delta}{2}] = \delta$. Since $\Lambda\in Act(G, Y)$, there exists a continuous $\epsilon$-isometry $h: Y\rightarrow X$ such that $\Phi\circ h = h\circ \Lambda$ and hence, $\Psi\circ k^{-1}\circ h = k^{-1}\circ h\circ \Lambda$. Since $k$ is an isometry, one can see that $k^{-1}h: Y\rightarrow X$ is a continuous $\epsilon$-isometry. Hence, $\Psi$ is strongly $GH$-stable.  
\end{proof}

\begin{Lemma}
Let $X$ be a metric space in which every closed ball is compact and $\Phi$ be expansive with expnaisve constant $\mathfrak{c}$. Then, for each $x\in X$ and every $\epsilon>0$ there is $n_{\epsilon}(x)\geq 0$ such that sup$_{g\in G_{n_{\epsilon}(x)}}d^{X}(\Phi_{g}(x), \Phi_{g}(x'))$ $\leq \mathfrak{c}$ implies that $d^{X}(x, x')< \epsilon$.
\label{4.5} 
\end{Lemma}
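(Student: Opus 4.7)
The plan is to argue by contradiction, exploiting the hypothesis that closed balls in $X$ are compact to extract a limit of ``bad'' points, and then using expansivity to derive a contradiction. Assume the conclusion fails for some $x\in X$ and some $\epsilon>0$. Then for every $n\geq 0$ there is a point $x'_{n}\in X$ such that
$\sup_{g\in G_{n}}d^{X}(\Phi_{g}(x),\Phi_{g}(x'_{n}))\leq \mathfrak{c}$ while $d^{X}(x,x'_{n})\geq \epsilon$.

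Since $e\in G_{n}$ for all $n$, taking $g=e$ shows $d^{X}(x,x'_{n})\leq \mathfrak{c}<\tfrac{1}{2}$, so in the original metric $d_{X}(x,x'_{n})\leq \mathfrak{c}$; in particular, every $x'_{n}$ lies in the closed ball $\overline{B}(x,\mathfrak{c})$, which is compact by hypothesis. Extract a convergent subsequence $x'_{n_{k}}\to x''$; passing to the limit in the inequalities above gives $d_{X}(x,x'')\geq \epsilon$ (so $x''\neq x$) and $d_{X}(x,x'')\leq \mathfrak{c}$.

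Now I would fix an arbitrary $g\in G$ and take $m$ with $g\in G_{m}$. For every $k$ large enough that $n_{k}\geq m$ one has $d^{X}(\Phi_{g}(x),\Phi_{g}(x'_{n_{k}}))\leq \mathfrak{c}$. Letting $k\to\infty$ and using continuity of $\Phi_{g}$ yields $d^{X}(\Phi_{g}(x),\Phi_{g}(x''))\leq \mathfrak{c}$. Since $g\in G$ was arbitrary and $\mathfrak{c}<\tfrac{1}{2}$, this translates to $d_{X}(\Phi_{g}(x),\Phi_{g}(x''))\leq \mathfrak{c}$ for all $g\in G$, contradicting the expansivity of $\Phi$ applied to the distinct points $x$ and $x''$.

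The only delicate step, and the likeliest source of confusion, is the interplay between the bounded metric $d^{X}$ that appears in the statement and the original metric $d_{X}$ underlying expansivity; the saving grace is the global convention $\mathfrak{c}<\tfrac{1}{2}$, which makes the two metrics agree on the relevant range and lets one freely pass from one to the other. Everything else (the compactness extraction and passing the non-strict inequality to the limit by continuity of each $\Phi_{g}$) is routine once the contradiction hypothesis is set up.
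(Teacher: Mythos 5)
Your proof is correct and follows essentially the same route as the paper's: negate the statement, use compactness of the closed ball of radius $\mathfrak{c}$ about $x$ to extract a convergent subsequence of the bad points, pass the inequalities to the limit using $G=\cup_{n}G_{n}$ and continuity of each $\Phi_{g}$, and contradict expansivity. Your extra care about the bounded metric $d^{X}$ versus $d_{X}$ (justified by the section's convention $\mathfrak{c}<\tfrac{1}{2}$) is a welcome clarification the paper leaves implicit.
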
 
\begin{proof}
If not then there exists $\epsilon > 0$ such that for each $n\geq 0$ there exists $x'_{n}\in X$ satisfying sup$_{g\in G_{n}}d^{X}(\Phi_{g}(x), \Phi_{g}(x'_{n}))\leq \mathfrak{c}$ and $d^{X}(x, x'_{n}) \geq \epsilon$. Since closed ball of radius $\mathfrak{c}$ around $x$ is compact, without loss of generality we may assume that $x'_{n}\rightarrow x'$. Since $G = \cup_{n\geq 0}G_{n}$, we get that $d^{X}(\Phi_{g}(x), \Phi_{g}(x'))\leq \mathfrak{c}$ for all $g\in G$ and $d^{X}(x, x')\geq \epsilon$, which contradicts the expansivity of $\Phi$. 
\end{proof}

\begin{theorem}
Let $X$ be a metric space in which every closed ball is compact. If $\Phi\in Act(G, X)$ is expansive and has the shadowing property, then it is strongly $GH$-stable. 
\end{theorem}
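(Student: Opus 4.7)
The plan is to use the witness $\delta$-isometry $i:Y\to X$ to convert each $\Psi$-orbit in $Y$ into a $\Phi$-pseudo orbit in $X$, and then let $k(y)$ be the unique point of $X$ that traces this pseudo orbit. Given $\epsilon>0$, I would first select $0<\epsilon_1<\min\{\epsilon/3,\mathfrak{c}/4\}$ and use the shadowing property of $\Phi$ to obtain $\delta_0>0$ so that every $\delta_0$-pseudo orbit is $\epsilon_1$-traced, and then take $\delta<\min\{\delta_0,\epsilon/3,\mathfrak{c}/4\}$. For any $\Psi\in Act(G,Y)$ with $\overline{d}_{GH}^{S}(\Phi,\Psi)<\delta$, I would fix a witnessing $\delta$-isometry $i:Y\to X$ with $d_S^X(\Phi\circ i,i\circ\Psi)<\delta$ as the input.

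For each $y\in Y$ consider $f_y:G\to X$ defined by $f_y(g)=i(\Psi_g(y))$; specialising $d^X(\Phi_s(i(z)),i(\Psi_s(z)))<\delta$ to $z=\Psi_g(y)$ shows that $f_y$ is a $\delta$-pseudo orbit of $\Phi$, so shadowing supplies some $x_y\in X$ with $d^X(\Phi_g(x_y),i(\Psi_g(y)))<\epsilon_1$ for all $g\in G$. Expansivity forces $x_y$ to be unique, because any two such tracing points $x_y,x'$ would satisfy $d^X(\Phi_g(x_y),\Phi_g(x'))<2\epsilon_1<\mathfrak{c}$ for every $g\in G$. I would set $k(y):=x_y$.

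Equivariance follows from the same uniqueness principle: for any $s\in S$, both $\Phi_s(k(y))$ and $k(\Psi_s(y))$ $\epsilon_1$-trace the pseudo orbit $g\mapsto i(\Psi_{gs}(y))$, so they agree, and induction on word length propagates this to $\Phi_g\circ k=k\circ\Psi_g$ on all of $G$. Taking $g=e$ in the tracing bound yields $d^X(k(y),i(y))<\epsilon_1$, which combined with the distortion property of the $\delta$-isometry $i$ gives both $|d^X(k(y_1),k(y_2))-d^Y(y_1,y_2)|<2\epsilon_1+\delta<\epsilon$ and $d_H^X(k(Y),X)<\epsilon_1+\delta<\epsilon$, so $k$ is an $\epsilon$-isometry.

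The main obstacle is continuity of $k$, and this is precisely where the hypothesis that every closed ball of $X$ is compact enters, through Lemma \ref{4.5}. Given $y_0\in Y$ and a target $\epsilon_2>0$, I would take $n=n_{\epsilon_2}(k(y_0))$ from that lemma and, for each $g\in G_n$, split $d^X(\Phi_g(k(y)),\Phi_g(k(y_0)))$ via the three-term triangle inequality through $i(\Psi_g(y))$ and $i(\Psi_g(y_0))$. Two of the resulting terms are bounded by $\epsilon_1$ via tracing, while the middle one is bounded by $d^Y(\Psi_g(y),\Psi_g(y_0))+\delta$ via the distortion inequality for $i$. Since $G_n$ is finite and each $\Psi_g$ is uniformly continuous, I can choose a neighbourhood of $y_0$ on which $d^Y(\Psi_g(y),\Psi_g(y_0))<\mathfrak{c}-2\epsilon_1-\delta$ uniformly over $g\in G_n$, yielding $\sup_{g\in G_n}d^X(\Phi_g(k(y)),\Phi_g(k(y_0)))\leq\mathfrak{c}$; Lemma \ref{4.5} then delivers $d^X(k(y),k(y_0))<\epsilon_2$, completing the argument.
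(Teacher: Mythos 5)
Your proposal is correct and follows essentially the same route as the paper: convert each $\Psi$-orbit into a $\delta$-pseudo orbit via the witnessing $\delta$-isometry $i$, define $k(y)$ as the unique (by expansivity) tracing point, derive equivariance and the $\epsilon$-isometry estimates from the tracing bound at $g=e$, and obtain continuity from Lemma \ref{4.5} together with the finiteness of $G_n$. The only differences are the choice of constants, which is immaterial.
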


\begin{proof}
Let $\mathfrak{c}$ be an expansivity constant for $\Phi\in Act(G,X)$ and let us fix generating set $S$ of $G$. Let $\epsilon>0$ be given and we choose $0 < \eta < \frac{1}{8}$min$\lbrace \epsilon, \mathfrak{c} \rbrace$. Let $0 < \delta < \eta$ be given for $\eta$ by the shadowing property of $\Phi$. Let $Y$ be a metric space and let $\Psi\in Act(G, Y)$ be such that $\overline{d}_{GH}^{S}(\Phi, \Psi) < \delta$. Thus, we can choose $\delta$-isometry  $i: Y\rightarrow X$ satisfying $d_{S}^{X}(\Phi\circ i, i\circ \Psi) < \delta$. 
\medskip

For each $y\in Y$, set $x_{g}^{y} = i(\Psi_{g}(y))$ for all $g\in G$. So, for all $g\in G$, $s\in S$ and each $y\in Y$, 
$d^{X}(x_{sg}^{y}, \Phi_{s}(x_{g}^{y}))= d^{X}(i(\Psi_{s}\Psi_{g}(y)), \Phi_{s}(i(\Psi_{g}(y))))= d^{X}(i\circ \Psi_{s}(\Psi_{g}(y)), \Phi_{s}\circ i(\Psi_{g}(y))) < \delta.$ 
\medskip

Therefore by expansivity and the shadowing property, there exists a unique $\eta$-tracing point $x^{y}$ of $\lbrace x_{g}^{y}\rbrace$ for each $y\in Y$. In particular, $d^{X}(\Phi_{g}(x^{y}), x_{g}^{y}) < \frac{\mathfrak{c}}{2}$ for all $g\in G$. Define $h : Y\rightarrow X$ by $h(y) = x^{y}$ satisfying $d^{X}(\Phi_{g}(h(y)), i(\Psi_{g}(y))) \leq \eta$ for all $y\in Y$, $g\in G$. 
\medskip

For $g = e$, we have $d^{X}(h(y), i(y))\leq \eta$ for all $y\in Y$. Then, for all $y_{1}, y_{2}\in Y$, we have
\begin{align*}
&|d^{X}(h(y_{1}), h(y_{2})) - d^{Y}(y_{1}, y_{2})| \\
&\leq  |d^{X}(h(y_{1}), h(y_{2})) - d^{X}(i(y_{1}), i(y_{2}))| + |d^{X}(i(y_{1}), i(y_{2})) - d^{Y}(y_{1}, y_{2})| \\
&\leq  |d^{X}(h(y_{1}), h(y_{2})) - d^{X}(h(y_{1}), i(y_{2}))|+ |d^{X}(h(y_{1}), i(y_{2})) - d^{X}(i(y_{1}), i(y_{2}))| + \delta \\ 
&\leq d^{X}(h(y_{2}), i(y_{2})) + d^{X}(h(y_{1}), i(y_{1})) + \delta \\
&\leq 2\eta +\delta < \epsilon
\end{align*}

Further, $d_{H}^{X}(h(Y), X)\leq d_{H}^{X}(h(Y), i(Y)) + d_{H}^{X}(i(Y), X))\leq \eta + \delta < \epsilon$. Thus, it follows that $h$ is an $\epsilon$-isometry. 
\medskip

For all $y\in Y$, $d^{X}(\Phi_{g}(h(\Psi_{s}(y))), i(\Psi_{g}(\Psi_{s}(y)))) \leq \eta$, $d^{X}(\Phi_{g}(\Phi_{s}(h(y))), i(\Psi_{gs}(y))) \leq \eta$. Therefore, by expansivity of $\Phi$ we get that $\Phi_{s}\circ h = h\circ \Psi_{s}$ for all $s\in S$. In other words, we have $\Phi\circ h = h\circ \Psi$.
\medskip

Let $y\in Y$ and $h(y) = x$. Further, let $\gamma > 0$ and $n_{\gamma}(x)$ is the number in Lemma \ref{4.5} corresponding to $\gamma$. Let $\delta > 0$ be such that $d^Y(x,y)<\delta$ implies that $d^Y(\Psi_g(x),\Psi_g(y))<\frac{\eta}{8}$ for all $g\in n_{\gamma}(x)$. Then, for all $g\in G_{n_{\gamma}(x)}$ and $y'\in Y$ such that $d^{Y}(y, y') < \delta$, we get that 
\medskip

$d^{X}(\Phi_{g}(h(y)), \Phi_{g}(h(y')))
\\
=  d^{X}(h(\Psi_{g}(y)), h(\Psi_{g}(y'))) 
\\
\leq d^{X}(h(\Psi_{g}(y)), i(\Psi_{g}(y'))) + d^{X}(i(\Psi_{g}(y)), i(\Psi_{g}(y')))+ d^{X}(i(\Psi_{g}(y')), h(\Psi_{g}(y')))
\\
\leq 2\eta + \delta + d^{Y}(\Psi_{g}(y),\Psi_{g}(y'))
\\
\leq 3\eta + \frac{\eta}{8}
\\
< \mathfrak{c}$
\medskip

This implies $d^{X}(h(y), h(y')) < \gamma$ and hence, $h$ is a continuous map. Thus, we conclude that $\Phi$ is strongly $GH$-stable. 
\end{proof}

\begin{Corollary}
Every expansive action with the shadowing property on a relatively compact metric space is strongly $GH$-stable and hence, $GH$-stable. 
\end{Corollary}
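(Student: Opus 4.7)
The plan is to deduce both assertions directly from the preceding theorem and Theorem~\ref{6.1}(i). If $X$ is compact (which is the substantive case, since in a compact metric space every closed ball is automatically compact), the hypotheses of the preceding theorem are satisfied and strong $GH$-stability of $\Phi$ follows at once. If one reads \emph{relatively compact} in the more liberal sense of having compact completion $\overline{X}$, I would extend $\Phi$ uniquely to an action $\overline{\Phi}\in Act(G,\overline{X})$ by uniform continuity; a routine density argument shows that both expansivity and the shadowing property transfer to $\overline{\Phi}$, the preceding theorem then yields strong $GH$-stability of $\overline{\Phi}$, and the result for $\Phi$ on the dense subspace $X$ follows.

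For the implication ``strongly $GH$-stable $\Rightarrow$ $GH$-stable,'' the argument is purely formal and uses only Theorem~\ref{6.1}(i), which gives
\[
\bar{d}_{GH}^{S}(\Phi, \Psi) \;\leq\; \max\bigl\{\bar{d}_{GH}^{S}(\Phi, \Psi),\, \bar{d}_{GH}^{S}(\Psi, \Phi)\bigr\} \;=\; d_{GH}^{S}(\Phi, \Psi).
\]
Hence, given $\epsilon > 0$, the constant $\delta > 0$ furnished by strong $GH$-stability also serves for $GH$-stability: whenever $\Psi\in Act(G,Y)$ satisfies $d_{GH}^{S}(\Phi, \Psi) < \delta$, one also has $\bar{d}_{GH}^{S}(\Phi, \Psi) < \delta$, and the continuous $\epsilon$-isometry $k: Y\to X$ satisfying $\Phi\circ k = k\circ \Psi$ supplied by strong $GH$-stability is precisely the witness required for $GH$-stability.

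The main point of care, and essentially the only place where anything beyond formal manipulation is required, is the reduction of the theorem's hypothesis (compact closed balls) to the corollary's hypothesis (relatively compact $X$); once that is handled, the remainder is an immediate appeal to the theorem and a one-line comparison of the two Gromov--Hausdorff distances.
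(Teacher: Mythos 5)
Your main line of argument is exactly the one the paper intends (the corollary is stated there without proof): when $X$ is compact every closed ball is compact, so the preceding theorem yields strong $GH$-stability, and the passage to $GH$-stability is the purely formal observation that Theorem \ref{6.1}(i) gives $\overline{d}_{GH}^{S}(\Phi,\Psi)\leq d_{GH}^{S}(\Phi,\Psi)$, so the same $\delta$ and the same conjugating $\epsilon$-isometry $k$ serve as witnesses. That part is correct and complete. The only place you overreach is the optional completion argument: it is not true that expansivity passes to the completion $\overline{X}$ by a ``routine density argument'' --- for distinct $x,y\in\overline{X}$ approximated by $x_{n},y_{n}\in X$, the separating elements $g_{n}$ may escape to infinity, and without some uniformity over $g$ of the family $\{\Phi_{g}\}$ one cannot extract a single $g$ separating $x$ from $y$ at scale $\mathfrak{e}$; the transfer of the shadowing property needs a similar justification; and even granting strong $GH$-stability of $\overline{\Phi}$, the resulting $\epsilon$-isometry maps $Y$ into $\overline{X}$ rather than into $X$, so the statement for $\Phi$ on the dense subspace does not ``follow'' without further work. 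Since the compact reading is clearly the intended one and your argument there is airtight, this does not affect the corollary, but the completion sketch should either be dropped or substantially expanded.
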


\begin{Example}
Let $G = \langle a, b : ba =a^{2}b \rangle$ be a solvable group with the symmetric generating set $S = \lbrace a, a^{-1}, b, b^{-1}\rbrace$ and let $F$ be an expansive homeomorphism with the shadowing property. Let $\mathfrak{c}$ be an expansivity constant for $F$. Further, assume that $F^{-1}$ is a contraction map i.e. there exists $0\leq K < 1$ such that $d(F^{-1}(x), F^{-1}(y)) \leq K d(x, y)$ for all $x, y\in X$. Let $\Phi\in Act(G,X)$ be generated by $\Phi_{a}(x) = x$ and $\Phi_{b}(x) = F(x)$ for all $x\in X$. It is then clear that $\Phi$ is an expansive action. Moreover, we claim that $\Phi$ has the shadowing property.  
\medskip

Let $\epsilon>0$ be given and $2\gamma = $min$\lbrace \mathfrak{c}, \epsilon\rbrace$. Further, let $\delta > 0$ be such that every $\delta$-pseudo orbit of $F$ is $\gamma$-traced. Let $f: G\rightarrow X$ be a $\delta$-pseudo orbit of $\Phi$. Now, for each $g\in G$, $\lbrace x_{i} = f(b^{i}g)\rbrace_{i\in \mathbb{Z}}$ is a $\delta$-pseudo orbit of $F$. Let $x_{g}$ be its $\gamma$-tracing point i.e. $d(F^{i}(x_{g}),f(b^{i}g))<\gamma$. Since $d(F^{i}(x_{bg}), f(b^{i}bg)) < \gamma$, $d(F^{i}(F(x_{g})), f(b^{i+1}g)) < \gamma$, we have that $d(F^{i}(x_{bg}), F^{i}(F(x_{g})) < 2\gamma \leq \mathfrak{c}$ for all $i\in \mathbb{Z}$. By expansivity of $F$, we have $x_{bg} = F(x_{g}) = \Phi_{b}(x_{g})$ for all $g\in G$. Since $\Phi_{a}(x) = x$, $d(f(g), f(ag)) < \delta$ for all $g\in G$. Then, $b^{i}a = a^{2^{i}}b^{i}$ for $i \geq 1$ implies that $d(f(a^{2i-k-1}b^{i}g), f(a^{2i-k}b^{i}g)) < \delta$ for $0\leq k\leq 2i$. Hence, $d(f(a^{2i}b^{i}g), f(b^{i}g)) < \delta(2i)$ for all $g\in G$. Since $f(a^{2^{i}}b^{i}g) = f(b^{i}ag)$, we have $d(F^{i}(x_{ag}), f(b^{i}ag)) < \gamma$ and $d(F^{i}(x_{g}), f(b^{i}g)) < \gamma$ for all $g\in G$, $i\geq 0$. Now since, $F^{-1}$ is a contraction map, we have that $d(x_{ag}, x_{g}) \leq K^{i}d(F^{i}(x_{ag}), F^{i}(x_{g})) \leq K^{i}(2\gamma + \delta(2i))$ for all $i\geq 0$. As $i\rightarrow \infty$, right hand side of the last inequality tends to zero, which implies that $x_{ag} = x_{g}$ for all $g\in G$. Thus, we have that $x_{bg} = F(x_{g}) = \Phi_{b}(x_{g})$ and $x_{ag} = \Phi_{a}(x_{g})$ for all $g\in G$. Therefore, $d(\Phi_{g}(x_{e}), f(g)) < \epsilon$ for all $g\in G$ and hence, we conclude that $\Phi$ has the shadowing property. 
\label{E4.8}
\end{Example}

\section{Sequential Shadowing Property} 

Smale's spectral decomposition theorem (Theorem 3.1.11 \cite{A1}) is one of the celebrated result in topological dynamics. Existence of such a decomposition is still not proved for finitely generated group actions satisfying expansivity and the shadowing property. In this section, we have introduced another variant of the shadowing property, called sequential shadowing property which surprisingly does not need the support of expansive property to spectrally decompose the system.                  
\medskip

\begin{Definition}
Let $x,y\in X$ and $\delta>0$. We say that a finite sequence $\lbrace x = x_{0}, x_{1},. . ., x_{k} =y\rbrace_{i=0}^{k}$ of elements of $X$ is a weak $\delta$-chain of $\Phi\in Act(G,X)$ from $x$ to $y$ with respect to the generating set $S$  if there is a sequence $\lbrace s_{x_i}\rbrace_{i=0}^{k-1}$ of elements of $S$ such that $d(\Phi_{s_{x_i}}(x_{i}), x_{i+1}) < \delta$ for $0\leq i\leq (k-1)$. We say that $x$ is weakly $\delta$-related to $y$ (written as $x WR^{S}_{\delta} y$) with respect to $S$, if there is weak $\delta$-chains from $x$ to $y$ and from $y$ to $x$. If $x WR^{S}_{\delta} y$ for each $\delta>0$, then $x$ is said to be weakly related to $y$ (written $x WR^{S} y$) with respect to $S$. Action $\Phi\in Act(G,X)$ is said to be weak chain transitive with respect to $S$ if $x WR^{S} y$ for all $x, y\in X$.
\label{5.2} 
\end{Definition}

For $\delta > 0$, $s\in S$ and $x\in X$, the sequence $\lbrace x, \Phi_{s}(x), x\rbrace$ is a weak $\delta$-chain of $\Phi\in Act(G,X)$ from $x$ to itself i.e. $x WR^{S} x$ for all $x\in X$. Thus, the set $WCR^{S}(\Phi)=\lbrace x\in X\mid x WR^{S} x\rbrace = X$ is independent of the generating set $S$. It is easy to check that the relation $WR^{S}$ is an equivalence relation on $WCR^{S}(\Phi) = X$. Thus, we can write $X = \cup_{\lambda\in \Lambda}B^{S}_{\lambda}$, where $B^{S}_{\lambda}[x]=\lbrace y : y WR^{S} x\rbrace$ is the equivalence class containing $x$. 

\begin{Definition}
A sequence $\lbrace x_{i}\rbrace_{i=-\infty}^{\infty}$ of elements of $X$ is said to be $\delta$-sequential pseudo orbit (SPO) of $\Phi\in Act(G,X)$ with respect to $S$ if there is a sequence $\lbrace s_{x_i}\rbrace_{i=-\infty}^{\infty}$ of elements of $S$ such that $d(\Phi_{s_{x_i}}(x_{i}), x_{i+1}) < \delta$ for all $i\in \mathbb{Z}$. A sequence $\lbrace x_{i}\rbrace_{i=-\infty}^{\infty}$ is $\epsilon$-sequentially traced (ST) if there is $x\in X$ and a sequence $\lbrace g_{i} \rbrace _{i = -\infty}^{\infty}$ with $g_{0} = e$ and $g_i\neq e$ for all $i\neq 0$, such that $d(\Phi_{g_{i}}(x), x_{i}) < \epsilon$ for all $i\in \mathbb{Z}$. Action $\Phi$ is said to have sequential shadowing property (SSP) if for every $\epsilon>0$ there is $\delta>0$ such that every $\delta$-SPO is $\epsilon$-ST by some point in $X$.   
\label{5.3}
\end{Definition} 

In case of homeomorphisms SSP suggests us that numerically found trajectories with uniformly small one-step forward or backward errors at each step, can be seen through a part of real trajectory. It helps us to understand the larger class of pseudo-trajectories by studying the original trajectories.

\begin{theorem}
Let $\Phi\in Act(G,X)$. Then,
\\
(i) SSP and weak chain transitivity of $\Phi$ are independent of the generating set.   
\\
(ii) SSP and weak chain transitivity of $\Phi$ are invariant under uniform conjugacy.  
\label{5.4} 
\end{theorem}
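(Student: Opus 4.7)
The plan is to treat statements (i) and (ii) by transferring sequential pseudo-orbits and weak chains across either a change of generating set or a uniform conjugacy, verifying in each case that the somewhat delicate condition ``$g_{0}=e$ and $g_{i}\neq e$ for $i\neq 0$'' built into $\epsilon$-sequential tracing survives the transfer.

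For part (i), I fix two finite symmetric generating sets $S$ and $S'$, and exploit the bi-Lipschitz equivalence of the associated word norms (already invoked in Theorem \ref{4.7}) to obtain a constant $C\geq 1$ with $L_{S}(s')\leq C$ for every $s'\in S'$. Assuming that $\Phi$ has SSP with respect to $S$, given $\epsilon>0$ choose $\delta>0$ by this SSP. For a $\delta$-SPO $\{x_{i}\}_{i\in\mathbb{Z}}$ of $\Phi$ with respect to $S'$ with associated generators $\{s'_{x_{i}}\}$, expand each $s'_{x_{i}}$ as a word $s_{j_{1}}\cdots s_{j_{k_{i}}}$ of length $k_{i}\leq C$ in $S$, and insert the intermediate exact $\Phi$-iterates between consecutive $x_{i}$ and $x_{i+1}$. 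The resulting refined bi-infinite sequence $\{w_{n}\}_{n\in\mathbb{Z}}$ is a $\delta$-SPO with respect to $S$, because every interior transition is exact while the terminal transition of each block inherits the original error $<\delta$. After reindexing so that $w_{0}=x_{0}$, SSP with respect to $S$ produces $x\in X$ and $\{g_{n}\}$ with $g_{0}=e$, $g_{n}\neq e$ for $n\neq 0$, tracing $\{w_{n}\}$ within $\epsilon$. Restricting to the indices $n_{i}$ at which $w_{n_{i}}=x_{i}$ yields a tracing sequence $\{g_{n_{i}}\}$ for $\{x_{i}\}$ within $\epsilon$; since each $k_{i}\geq 1$, $n_{i}$ is non-zero whenever $i\neq 0$, so $g_{n_{i}}\neq e$. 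Swapping the roles of $S$ and $S'$ gives the converse, and the same refinement applied to weak $\delta$-chains proves the analogous statement for weak chain transitivity.

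For part (ii), let $h: X\to Y$ be a uniform conjugacy between $\Phi\in Act(G,X)$ and $\Psi\in Act(G,Y)$. Assuming $\Phi$ has SSP and fixing $\epsilon>0$, I chain the uniform continuities of $h$ and $h^{-1}$ with the SSP constant of $\Phi$: pick $\epsilon'>0$ so that $h$ sends $\epsilon'$-close pairs to $\epsilon$-close pairs, let $\delta'$ be the SSP constant of $\Phi$ for $\epsilon'$, and pick $\delta>0$ so that $h^{-1}$ sends $\delta$-close pairs to $\delta'$-close pairs. A $\delta$-SPO $\{y_{i}\}$ of $\Psi$ with generators $\{s_{y_{i}}\}$ pulls back via $x_{i}=h^{-1}(y_{i})$ to a $\delta'$-SPO of $\Phi$ with the same generator sequence, using $h^{-1}\circ\Psi_{g}=\Phi_{g}\circ h^{-1}$. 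Applying SSP of $\Phi$ and pushing the resulting tracing point forward by $h$ gives $h(x)\in Y$ together with the same witness sequence $\{g_{i}\}$, so both the $\epsilon$-closeness and the identity/non-identity pattern of the $g_{i}$'s pass to $Y$ unchanged. Weak chain transitivity transfers identically, by pushing weak chains through $h^{\pm 1}$ after the same uniform-continuity adjustment.

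The only non-routine step is the bookkeeping around the ``$g_{0}=e,\ g_{i}\neq e$'' clause in $\epsilon$-ST: under conjugacy it is preserved trivially because the witness sequence is shared, while under the refinement in (i) it relies on the observation that every inserted block has positive length $k_{i}\geq 1$, so non-zero original indices $i$ are mapped to non-zero refined indices $n_{i}$. Once this is pinned down, the remainder is routine $\epsilon$-$\delta$ manipulation.
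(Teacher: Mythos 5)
Your argument for (i) --- expanding each generator of one set as a word in the other, inserting the exact intermediate iterates so that only the terminal transition of each block carries the error $<\delta$, and then restricting the tracing sequence to the original indices --- is exactly the refinement the paper uses, and your bookkeeping of the $g_{0}=e$, $g_{i}\neq e$ clause (nonzero original indices land at nonzero refined indices since each block has length $\geq 1$) is sound. For (ii) the paper only states that the proof is ``easy to work out''; your transfer via $h^{\pm 1}$ with the chained uniform-continuity constants and the shared witness sequence $\lbrace g_{i}\rbrace$ is correct and supplies the details the paper omits.
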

  
\begin{proof}  
(i) Let $S$ and $S'$ be two generating sets of $G$. Since any two word norms are bi-Lipschitz equivalent, we can choose a constant $C \geq 1$ such that $\frac{|g|_{S'}}{C}\leq |g|_{S} \leq C|g|_{S'}$ for all $g\in G$. Let $x,y\in X$ be such that $d(\Phi_{s}(x), y) < \delta$. If $s = s'_{1}s'_{2}...s'_{C}$ for $s'_{i}\in S'$, $1 \leq i \leq C$ then, $\lbrace x, \Phi_{s'_{C}}(x), \Phi_{s'_{C-1}s'_{C}}(x),..., \Phi_{s'_{2}s'_{C-1}s'_{C}}(x), y\rbrace$ forms a weak $\delta$-chain with respect to $S'$. So, if $\lbrace x_{0}, x_{1}, ..., x_{k}\rbrace$ is weak $\delta$-chain with respect to $S$, then using the same methodology it can be extended to weak $\delta$-chain with respect to $S'$. Similarly, any $\delta$-SPO with respect to $S$ can be extended to $\delta$-SPO with respect to $S'$. Therefore, we can conclude that SSP and weak chain transitivity of $\Phi$ are independent of the generating set.

(ii) The proof of this part is easy to work out.    
\end{proof} 

\begin{theorem}
Let $X$ be a compact metric space. The trivial action $\Phi\in Act(G, X)$ has SSP if and only if $X$ is totally disconnected.
\label{5.9}
\end{theorem}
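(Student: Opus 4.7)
The plan is to unwind what SSP means for the trivial action and then reduce both directions to standard facts about compact totally disconnected metric spaces and connected metric spaces. For $\Phi_g=\mathrm{Id}_X$, the $\delta$-SPO condition $d(\Phi_{s_{x_i}}(x_i),x_{i+1})<\delta$ collapses to $d(x_i,x_{i+1})<\delta$, and the $\epsilon$-ST condition $d(\Phi_{g_i}(x),x_i)<\epsilon$ collapses to $d(x,x_i)<\epsilon$. Thus SSP for the trivial action is equivalent to the statement: for every $\epsilon>0$ there is $\delta>0$ such that every bi-infinite sequence in $X$ whose consecutive terms are $\delta$-close is contained in some open ball of radius $\epsilon$.

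For the $(\Leftarrow)$ direction, assuming $X$ is compact and totally disconnected, I will invoke the classical fact that such $X$ admits, for each $\epsilon>0$, a finite partition into pairwise disjoint clopen sets $U_1,\ldots,U_n$ with $\mathrm{diam}(U_j)<\epsilon$. Setting $\delta=\min_{i\neq j}d(U_i,U_j)$, which is positive because disjoint clopen subsets of a compact metric space are uniformly separated, any $\delta$-SPO $\{x_i\}$ must stay in a single piece $U_{j_0}$; otherwise two consecutive terms in distinct pieces would be at distance at least $\delta$. Hence every $x_i$ lies within $\epsilon$ of $x_0$, and taking $x=x_0$, $g_0=e$, and any choice of $g_i\neq e$ for $i\neq 0$ produces the required $\epsilon$-tracing.

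For the $(\Rightarrow)$ direction, I will argue by contrapositive. If $X$ is not totally disconnected, pick a connected component containing two distinct points $a,b$, set $r=d(a,b)>0$, and apply SSP to $\epsilon=r/4$ to obtain $\delta>0$. Using the standard fact that, in a connected metric space, the relation ``joinable by a $\delta$-chain'' has clopen equivalence classes (so only one class exists), I will produce a finite $\delta$-chain $a=x_0,x_1,\ldots,x_k=b$ inside $X$. Padding by constants $\ldots,a,a$ on the left and $b,b,\ldots$ on the right gives a bi-infinite $\delta$-SPO, and any $\epsilon$-tracing point $x$ would satisfy $d(x,a)<r/4$ and $d(x,b)<r/4$, forcing $r<r/2$ by the triangle inequality, a contradiction. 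The only mild subtlety in the whole argument is handling the indexing constraints $g_0=e$ and $g_i\neq e$ for $i\neq 0$ in Definition \ref{5.3}; for the trivial action these are vacuous beyond requiring that $G$ contain at least one non-identity element, which is tacit throughout the paper.
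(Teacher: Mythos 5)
Your proposal is correct and follows essentially the same route as the paper: the forward direction builds a $\delta$-chain between two points of a nontrivial component, pads it to a bi-infinite $\delta$-SPO, and derives a contradiction from the tracing point being close to both endpoints, while the reverse direction uses a finite clopen partition into pieces of diameter less than $\epsilon$ and takes $\delta$ below the minimal gap between pieces. The only differences are cosmetic (your $\epsilon=r/4$ versus the paper's $d(x,y)/3$, and your explicit remark about the constraints $g_0=e$, $g_i\neq e$, which the paper leaves tacit).
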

\begin{proof}
Suppose that $\Phi$ has SSP and $X$ has at least one non-trivial connected component $C$. Let $x$,$y$ be two distinct points in $C$ and $\epsilon= \frac{d(x, y)}{3}$. Let $\delta$ be given for $\epsilon$ by SSP of $\Phi$. Further, let $z_0 = x,$...$,z_k = y$ be a sequence of points from $x$ to $y$ so that $d(z_{j+1},z_j) < \delta$ for $j = 0,$...$,k-1$. Such a chain exists by the connectedness of $C$. Let $s\in S$ be fixed and
\[
p_{i} =
	\begin{cases}
	\Phi_{s^{i}}(x) & i < 0 \\
	\Phi_{s^{i}}(z_{i}) & 0\leq i\leq k\\
	\Phi_{s^{i}}(y) & i > k
	\end{cases}
\]
Since for $i = 0,$...$,k-1$, $d(\Phi_{s}(p_{i}), p_{i+1}) = d(\Phi_{s}\Phi_{s^{i}}(z_{i}), \Phi_{s^{i+1}}) = d(z_{i}, z_{i+1}) < \delta$, $\lbrace p_{i}\rbrace_{i=-\infty}^{\infty}$ is $\delta$-SPO. Suppose $\lbrace p_i\rbrace$ is $\epsilon$-ST by a point $p\in X$ via the sequence $\lbrace g_{i}\rbrace_{i=-\infty}^{\infty}$. Thus, by definition of $\epsilon$-ST, we get that $d(p, x) < \epsilon$ and $d(\Phi_{g_{k}}(p), \Phi_{s^{p_{k}}}(y)) = d(p, y) < \epsilon$. Then, $0 < \epsilon = \frac{d(x, y)}{3} \leq \frac{d(x, p) + d(p, y)}{3} < \frac{2\epsilon}{3}$, which is absurd. So, $X$ must be totally disconnected. 
\medskip

Conversely, if $X$ is totally disconnected, then for each $\epsilon>0$ there exists a finite open cover $U_{0},..., U_{m}$ of $X$ such that $U_{i}\cap U_{j}=\phi$, for $i\neq j$ and $diam(U_{i})<\epsilon$, for each $i$, $j\in \lbrace 0 , 1 ,... , m\rbrace$. 
Choose $\delta$ such that $0<\delta <$ min$\lbrace d(U_{i},U_{j}):i\neq j)\rbrace$ and choose a $\delta$-pseudo orbit $\lbrace x_{i}\rbrace_{i=-\infty}^{\infty}$ of $I$. Then, there is $U_{j}$ such that $\lbrace x_{i}\rbrace_{i=-\infty}^{\infty} \subset U_{j}$ and hence any point of $U_{i}$ can $\epsilon$-trace this orbit. 
\end{proof}

\begin{theorem}
Let $\pi:Y\rightarrow X$ be a locally isometric covering map. Suppose that $\Phi\in Act(G,X)$, $\Psi\in Act(G,Y)$ satisfy $\pi\Psi = \Phi\pi$ i.e. $\pi\Psi_{g}(y) = \Phi_{g}\pi(y)$, for all $y\in Y$, $g\in G$. Further, suppose that there exists $\delta_{0} >0$ such that for each $y\in Y$ and $0<\delta <\delta_{0}$, $\pi : U_{\delta}(y)\rightarrow U_{\delta}(\pi(y))$ is an isometry. Then, $\Phi$ has SSP if and only if $\Psi$ has SSP. 
\label{5.10}
\end{theorem}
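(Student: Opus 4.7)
The plan is to adapt the proof of Theorem \ref{3.13}(ii) to the bi-infinite sequential setting, exploiting the key feature that $\pi$ is an isometry between the $\delta_0$-balls $U_{\delta_0}(y)$ and $U_{\delta_0}(\pi(y))$. This will make $\pi$ both a \emph{projection} that sends $\delta$-SPO's of $\Psi$ to $\delta$-SPO's of $\Phi$ (with the same witness sequence $\{s_{x_i}\}$ in $S$) and a \emph{lifting} device that allows a $\delta$-SPO of $\Phi$ (with $\delta < \delta_0$) to be lifted one step at a time to a $\delta$-SPO of $\Psi$.

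For the forward direction, assume $\Phi$ has SSP. Given $\epsilon \leq \delta_0$, choose $\delta < \epsilon$ by SSP of $\Phi$. If $\{y_i\}_{i \in \mathbb{Z}}$ is a $\delta$-SPO of $\Psi$ with witnesses $\{s_{y_i}\}$, then $p(\Psi_{s_{y_i}}(y_i), y_{i+1}) < \delta < \delta_0$ forces $y_{i+1} \in U_{\delta_0}(\Psi_{s_{y_i}}(y_i))$, and the local-isometry hypothesis together with $\pi\Psi = \Phi\pi$ yields $d(\Phi_{s_{y_i}}(\pi(y_i)), \pi(y_{i+1})) < \delta$. Hence $\{\pi(y_i)\}$ is a $\delta$-SPO of $\Phi$, so there exist $x \in X$ and a sequence $\{g_i\}$ with $g_0 = e$, $g_i \neq e$ for $i \neq 0$, such that $d(\Phi_{g_i}(x), \pi(y_i)) < \epsilon \leq \delta_0$. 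Since $\pi$ is an isometric covering, lift $x$ to the unique $y \in U_{\delta_0}(y_0)$ with $\pi(y) = x$. Then $\pi(\Psi_{g_i}(y)) = \Phi_{g_i}(x)$ lies within $\delta_0$ of $\pi(y_i)$, so $\Psi_{g_i}(y)$ and $y_i$ belong to the same component of $\pi^{-1}(U_{\delta_0}(\pi(y_i)))$, giving $p(\Psi_{g_i}(y), y_i) < \epsilon$.

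For the converse, assume $\Psi$ has SSP. Given $\epsilon \leq \delta_0$, choose $\delta < \epsilon$ by SSP of $\Psi$, and let $\{x_i\}$ be a $\delta$-SPO of $\Phi$ with witnesses $\{s_{x_i}\}$. Fix any $y_0 \in \pi^{-1}(x_0)$ and construct $\{y_i\}_{i \in \mathbb{Z}}$ inductively: for $i \geq 0$, having $y_i$ in hand, use that $x_{i+1} \in U_{\delta_0}(\Phi_{s_{x_i}}(x_i)) = \pi(U_{\delta_0}(\Psi_{s_{x_i}}(y_i)))$ to define $y_{i+1}$ as the unique preimage of $x_{i+1}$ lying in $U_{\delta_0}(\Psi_{s_{x_i}}(y_i))$, which automatically gives $p(\Psi_{s_{x_i}}(y_i), y_{i+1}) < \delta$; for $i \leq -1$, symmetrically lift $x_i$ into the component of $\pi^{-1}(U_{\delta_0}(\Phi_{s_{x_i}^{-1}}(x_{i+1})))$ containing $\Psi_{s_{x_i}^{-1}}(y_{i+1})$ and then apply $\Psi_{s_{x_i}}$. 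Applying SSP of $\Psi$ to the resulting $\delta$-SPO $\{y_i\}$ yields $y \in Y$ and $\{g_i\}$ with $p(\Psi_{g_i}(y), y_i) < \epsilon \leq \delta_0$; setting $x = \pi(y)$ and invoking the local isometry a final time gives $d(\Phi_{g_i}(x), x_i) = p(\Psi_{g_i}(y), y_i) < \epsilon$.

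The main obstacle is the sheet-selection in the covering: both directions hinge on the fact that when $\pi$-images lie within $\delta_0$ of each other, the corresponding points in $Y$ either lie in a common isometric chart or can be chosen to do so via the inductive lift. Once this sheet-bookkeeping is handled, the transfer of the tracing sequence $\{g_i\}$ (including the constraints $g_0 = e$ and $g_i \neq e$ for $i \neq 0$) is automatic, because $\pi$ and its local inverses intertwine $\Phi$ and $\Psi$ and the condition is invariant under the correspondence $y \leftrightarrow x = \pi(y)$.
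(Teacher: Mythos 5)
Your overall strategy is exactly what the paper intends: its entire proof of Theorem \ref{5.10} is the single sentence ``similar to Theorem \ref{3.13}'', and you adapt the projection/lifting scheme of Theorem \ref{3.13}(ii) to bi-infinite sequences. Your converse direction ($\Psi$ has SSP $\Rightarrow$ $\Phi$ has SSP) is sound, and in fact more careful than the paper's own lifting step: because a $\delta$-SPO is indexed by $\mathbb{Z}$ rather than by $G$, your one-step-at-a-time choice of sheets produces a genuine lift, and the final estimate $d(\Phi_{g_i}(x),x_i)=p(\Psi_{g_i}(y),y_i)<\epsilon$ is legitimate since $p(\Psi_{g_i}(y),y_i)<\epsilon\leq\delta_0$ puts both points in a common ball on which $\pi$ is an isometry.

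The forward direction, however, has a genuine gap at the sheet-matching step for the \emph{traced} orbit. After lifting the tracing point $x$ to the $y\in U_{\delta_0}(y_0)$ over $x$, you assert that $\pi(\Psi_{g_i}(y))=\Phi_{g_i}(x)$ being within $\delta_0$ of $\pi(y_i)$ forces $\Psi_{g_i}(y)$ and $y_i$ to lie in the same component of $\pi^{-1}(U_{\delta_0}(\pi(y_i)))$. Nothing in the hypotheses delivers this: closeness downstairs only says $\Psi_{g_i}(y)$ lies in \emph{some} sheet over $U_{\delta_0}(\pi(y_i))$, and it could be a sheet far from $y_i$, in which case $p(\Psi_{g_i}(y),y_i)$ is large. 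In the setting of Theorem \ref{3.13}(ii) this can be repaired by induction on the word length of $g$, using uniform continuity to propagate the same-sheet property from $G_n$ to $G_{n+1}$ along the pseudo-orbit. That repair is unavailable here: the tracing sequence $\{g_i\}$ supplied by SSP of $\Phi$ carries no structural relation between $g_i$ and $g_{i+1}$ (in particular $g_{i+1}$ need not equal $s_{y_i}g_i$), so there is no index along which to induct and no way to track which sheet $\Psi_{g_i}(y)$ falls into. To close the gap you would either need to produce a tracing sequence compatible with the generator sequence $\{s_{y_i}\}$ of the SPO (which the definition of SSP does not guarantee) or find a different mechanism forcing $\Psi_{g_i}(y)$ into the sheet of $y_i$; as written, the step does not follow.
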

\begin{proof}
The proof is similar to the proof of Theorem \ref{3.13}.
\end{proof}
\begin{theorem}
Let $\Phi\in Act(G, X)$. If $x R^S_{\delta} y$, then $x WR^{S}_{\delta} y$. 
\label{5.5} 
\end{theorem}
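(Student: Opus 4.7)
The plan is to extract weak $\delta$-chains in both directions directly from the $\delta$-pseudo orbit that witnesses $x R^S_\delta y$, by restricting it along a word in $S$. Let $f:G\to X$ be a $\delta$-pseudo orbit of $\Phi$ with $f(h)=x$ and $f(k)=y$ for some distinct $h,k\in G$.

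First, since $S$ generates $G$ and $h\neq k$, we can write $kh^{-1}=s_m s_{m-1}\cdots s_1$ with $m\geq 1$ and each $s_i\in S$. Define $x_0=f(h)=x$, and for $1\leq i\leq m$ set $x_i=f(s_i s_{i-1}\cdots s_1 h)$, so that $x_m=f(k)=y$. Applying the pseudo-orbit inequality $d(\Phi_{s}(f(g)), f(sg))<\delta$ with $s=s_{i+1}$ and $g=s_i\cdots s_1 h$ yields
\[
d(\Phi_{s_{i+1}}(x_i), x_{i+1})<\delta, \qquad 0\leq i\leq m-1.
\]
Hence $\{x_0, x_1,\ldots, x_m\}$ is a weak $\delta$-chain from $x$ to $y$ with generator sequence $(s_1,\ldots, s_m)$.

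For the reverse chain, use that $S$ is symmetric, so $hk^{-1}=s_1^{-1}s_2^{-1}\cdots s_m^{-1}$ is also a word in $S$. Setting $t_i=s_{m-i+1}^{-1}\in S$ and $y_i=f(t_i\cdots t_1 k)$ for $0\leq i\leq m$, we have $y_0=y$, $y_m=f(hk^{-1}\cdot k)=x$, and the same pseudo-orbit inequality produces $d(\Phi_{t_{i+1}}(y_i), y_{i+1})<\delta$. This is a weak $\delta$-chain from $y$ to $x$. The existence of both chains is precisely the condition $x WR^S_\delta y$.

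I do not anticipate a real obstacle here: the argument is essentially unpacking the pseudo-orbit condition letter-by-letter along a fixed word in $S$, and the symmetry of the generating set is what provides the return chain. The only point to note is that $h\neq k$ forces $m\geq 1$, which guarantees that the weak chain has at least one step (rather than degenerating to a single point).
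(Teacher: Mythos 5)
Your proof is correct and follows essentially the same approach as the paper: both arguments restrict the $\delta$-pseudo orbit $f$ along a word in $S$ and read off the weak $\delta$-chain letter by letter, the only cosmetic difference being that you walk directly along a word for $kh^{-1}$ from $f(h)$ to $f(k)$ while the paper routes through $f(e)$ by peeling off the letters of $h$ and then appending those of $k$. Your explicit construction of the return chain (which the paper dismisses with ``similarly'') and your remark that $h\neq k$ forces a nonempty word are both sound.
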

\begin{proof}
Let $f$ be $\delta$-pseudo orbit such that $f(g) = x$ and $f(h) = y$ for distinct $g,h\in G$. Assume that $g = g_{1}g_{2}$...$g_{p}$ and $h = h_{1}h_{2}$...$h_{q}$, where $g_{i}, h_{j}\in S$ for all $1\leq i\leq p$, $1\leq j\leq q$. Since $d(\Phi_{s}(f(g)), f(sg)) < \delta$ for all $s\in S$, $g\in G$, the sequence $ x_{0}= x =f(g)$, $x_{1} = f(g_{2}$...$g_{p})$, $x_{2} = f(g_{3}$...$g_{p})$,..., $x_{p-1} = f(g_{p})$, $x_{p} = f(e)$, $x_{p+1} = f(h_{q})$, $x_{p+2} = f(h_{q-1}h(q))$,..., $x_{p+q} = f(h_{1}$...$h_{q})$ forms a weak $\delta$-chain via sequence $\lbrace g_{1}^{-1}, g_{2}^{-1}$,..., $g_{p}^{-1}, h_{q}, h_{q-1}$,...,$h_{1}\rbrace$ of elements of $S$ from $x$ to $y$. Similarly, we can construct a weak $\delta$-chain from $y$ to $x$. Hence, $x WR^S_{\delta} y$.  
\end{proof}

\begin{Corollary}
For $\Phi\in Act(G, X)$, transitivity implies chain transitivity implies weak chain transitivity. 
\label{5.6} 
\end{Corollary}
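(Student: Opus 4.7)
The corollary chains two implications. The second, chain transitivity implies weak chain transitivity, is immediate from Theorem \ref{5.5}: chain transitivity says that for every $x, y \in X$ and every $\delta > 0$ there is a $\delta$-pseudo orbit from $x$ to $y$, so $x R^S_\delta y$ holds for all $\delta > 0$, and Theorem \ref{5.5} upgrades each such relation to $x WR^S_\delta y$; hence $x WR^S y$ for every pair, and $\Phi$ is weakly chain transitive. The real content is the first implication.

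For transitivity implies chain transitivity, the plan is to fix $x, y \in X$ and $\delta > 0$ and produce a globally defined $\delta$-pseudo orbit $f : G \to X$ with $f(h) = x$ and $f(k) = y$ for some distinct $h, k \in G$. By uniform continuity of each $\Phi_s$, first choose $0 < \eta < \delta/2$ so that $d(a, b) < \eta$ implies $d(\Phi_s(a), \Phi_s(b)) < \delta/2$ for every $s \in S$. Applying transitivity to the open balls $B(x, \eta)$ and $B(y, \eta)$ yields some $g_0 \in G \setminus \{e\}$ and a point $z \in X$ with $d(z, x) < \eta$ and $d(\Phi_{g_0}(z), y) < \eta$. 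Define $f(e) = x$, $f(g_0) = y$, and $f(h) = \Phi_h(z)$ for $h \notin \{e, g_0\}$. Once the pseudo-orbit inequality is verified, $f(e) = x \neq y = f(g_0)$ witnesses $x R^S_\delta y$, and since $\delta$ and the pair $(x, y)$ were arbitrary, $\Phi$ is chain transitive.

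The pseudo-orbit bound $d(\Phi_s(f(h)), f(sh)) < \delta$ is checked case-by-case according to whether $h$ or $sh$ equals one of the two special points $e, g_0$. When neither does, $\Phi_s(f(h)) = f(sh)$ is an exact equality. When exactly one of $h, sh$ lies in $\{e, g_0\}$, one triangle step collapses the discrepancy to $d(z, x)$ or $d(\Phi_{g_0}(z), y)$, each less than $\eta < \delta$. The boundary cases in which both $h$ and $sh$ lie in $\{e, g_0\}$ can occur only when $g_0 \in S$ (so also $g_0^{-1} \in S$ by symmetry), and there one application of the uniform continuity chosen above absorbs the two errors into $\eta + \delta/2 < \delta$. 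The main obstacle is precisely this bookkeeping: the pseudo-orbit condition must be enforced on \emph{all} of $G$, not just along a chain from $x$ through $z$ to $y$, so the genuine orbit of $z$ is patched in off the two special points and the slack $\eta$ must be fixed once at the start, large enough to cover all the boundary cases simultaneously. Beyond this, the argument uses nothing more than transitivity, uniform continuity of the generators, and the symmetry of $S$.
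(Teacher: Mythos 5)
Your proof is correct, but it diverges from the paper on the first implication. The paper disposes of ``transitivity $\Rightarrow$ chain transitivity'' by citing Proposition 2.8 of \cite{B1} (together with Theorem \ref{4.7} to make the notion generating-set independent), and only the second implication is derived internally from Theorem \ref{5.5}, exactly as you do. You instead build the required $\delta$-pseudo orbit from scratch: transitivity applied to $B(x,\eta)$ and $B(y,\eta)$ produces $g_0\neq e$ and $z$ with $d(z,x)<\eta$ and $d(\Phi_{g_0}(z),y)<\eta$, and you patch $x$ and $y$ into the genuine orbit $h\mapsto \Phi_h(z)$ at the two slots $e$ and $g_0$. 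I checked your case analysis: the only nontrivial boundary cases are $s=g_0\in S$ or $s=g_0^{-1}\in S$, and your choice of $\eta<\delta/2$ via uniform continuity (taken uniformly over the finite set $S$) handles them, with the symmetry of $S$ covering the $g_0^{-1}$ case; the argument also degenerates correctly when $x=y$, since $e$ and $g_0$ remain distinct group elements. What your route buys is a self-contained, fully explicit argument that makes visible why the pseudo-orbit condition can be enforced on all of $G$ rather than just along a finite chain; what the paper's route buys is brevity at the cost of an external dependency. One small caution: your summary sentence that a single triangle step collapses the discrepancy to $d(z,x)$ or $d(\Phi_{g_0}(z),y)$ is literally accurate only when $sh\in\{e,g_0\}$ and $h\notin\{e,g_0\}$; in the mirror cases $h\in\{e,g_0\}$, $sh\notin\{e,g_0\}$ the discrepancy is $d(\Phi_s(x),\Phi_s(z))$ or $d(\Phi_s(y),\Phi_s(\Phi_{g_0}(z)))$ and genuinely needs the uniform continuity built into your choice of $\eta$ --- which you did set up at the start, so the proof stands.
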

\begin{proof}
It follows from Theorem \ref{4.7}, Theorem \ref{5.5} together with \textit{Proposition 2.8} \cite{B1}. 
\end{proof}

\begin{Lemma}
Let $\Phi\in Act(G, X)$, then $B^{S}_{\lambda}$ is closed and $\Phi$-invariant for each $\lambda \in \Lambda$.
\label{5.7}
\end{Lemma}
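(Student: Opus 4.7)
The plan is to verify closedness and $\Phi$-invariance separately, relying on the facts already established in the paper that $WR^{S}$ is an equivalence relation on $X$ and that the generators $\Phi_{s}$ ($s\in S$) are uniformly continuous.

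For closedness, I would fix a representative $x\in X$ and let $\{y_{n}\}\subset B^{S}_{\lambda}[x]$ converge to some $y\in X$. Given $\delta>0$, using finiteness of $S$ together with uniform continuity of each $\Phi_{s}$, I would pick $\eta\in(0,\delta/2)$ so that $d(a,b)<\eta$ implies $d(\Phi_{s}(a),\Phi_{s}(b))<\delta/2$ for every $s\in S$, and then choose $n$ with $d(y_{n},y)<\eta$. Since $y_{n}\,WR^{S}_{\eta}\,x$, there are weak $\eta$-chains $\{y_{n}=z_{0},z_{1},\ldots,z_{k}=x\}$ (via $s_{0},\ldots,s_{k-1}$) and $\{x=w_{0},\ldots,w_{m}=y_{n}\}$ (via $t_{0},\ldots,t_{m-1}$). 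Replacing $z_{0}$ by $y$ in the first chain changes only the first step, and a triangle inequality bounds it by $d(\Phi_{s_{0}}(y),\Phi_{s_{0}}(y_{n}))+d(\Phi_{s_{0}}(y_{n}),z_{1})<\delta/2+\eta<\delta$. Replacing $w_{m}$ by $y$ in the second chain changes only the last step, bounded by $d(\Phi_{t_{m-1}}(w_{m-1}),y_{n})+d(y_{n},y)<\eta+\eta<\delta$. Both modified sequences are weak $\delta$-chains, giving $y\,WR^{S}\,x$, hence $y\in B^{S}_{\lambda}[x]$.

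For $\Phi$-invariance, I would first observe that for any $z\in X$ and any $s\in S$, the length-one sequence $\{z,\Phi_{s}(z)\}$ via $s$ and the sequence $\{\Phi_{s}(z),z\}$ via $s^{-1}\in S$ (using symmetry of $S$) are weak $\delta$-chains for every $\delta>0$, since in each case the relevant distance is exactly zero. Thus $z\,WR^{S}\,\Phi_{s}(z)$ for every $z\in X$ and $s\in S$. Given $y\in B^{S}_{\lambda}[x]$, transitivity of the equivalence relation $WR^{S}$ then yields $\Phi_{s}(y)\,WR^{S}\,x$, so $\Phi_{s}(B^{S}_{\lambda}[x])\subset B^{S}_{\lambda}[x]$ for every $s\in S$. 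An induction on the word length of $g\in G$ relative to the symmetric generating set $S$ extends this to $\Phi_{g}(B^{S}_{\lambda}[x])\subset B^{S}_{\lambda}[x]$ for every $g\in G$, which is the required $\Phi$-invariance.

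I do not foresee a serious obstacle. The only point demanding minimal care is that the uniform continuity modulus $\eta$ in the closedness argument must be chosen uniformly across $s\in S$; this is possible precisely because $S$ is finite. The $\Phi$-invariance is essentially a one-line observation once one uses the symmetry of $S$ and transitivity of $WR^{S}$.
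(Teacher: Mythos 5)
Your proof is correct and takes essentially the same route as the paper: closedness by splicing the limit point onto existing weak chains of a nearby $y_{n}$, and invariance by observing that each $z$ and $\Phi_{s}(z)$ are weakly related (via $s$ and $s^{-1}$, using symmetry of $S$) and then extending through the equivalence relation. The only cosmetic difference is in the gluing step for closedness: the paper inserts a round trip $y\to\Phi_{s}(y)\to y_{m}$ whose links have length $0$ and $d(y,y_{m})<\epsilon$, thereby avoiding any appeal to continuity, whereas you perturb the first and last links using a uniform continuity modulus for the finitely many $\Phi_{s}$; both are valid.
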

\begin{proof}
Let $\epsilon > 0$ and $\lbrace y_{j}\rbrace$ be a sequence in $B^{S}_{\lambda} = [z]$ such that $y_{j}$ converges to $y$. 
Choose $m >0$ such that $d(y,y_{j})<\epsilon$ for all $j\geq m$. Since $y_{m}\in B^{S}_{\lambda}$, there exist weak $\epsilon$-chains $\lbrace y_{m} = x_{0}, x_{1}$,...,$x_{k-1}, x_{k} = z\rbrace$ and $\lbrace z = x_{0}, x_{1}$,...,$x_{k'-1}, x_{k'}=y_{m}\rbrace$. Fix $s\in S$, then $\lbrace y, \Phi_{s}(y), y_{m}, x_{1}$,...,$x_{k-1}, x_{k}=z\rbrace$ and $\lbrace z = x_{0}, x_{1}$, ...,$x_{k'-1}, x_{k'} = y_{m}, \Phi_{s}(y_{m}), y\rbrace$ are weak $\epsilon$-chains from $y$ to $z$ and from $z$ to $y$. Hence, $y\in B^{S}_{\lambda}$ and thus $B^{S}_{\lambda}$ is closed for each $\lambda\in \Lambda$.  

Now, Let $s'\in S$, $\lambda\in \Lambda$, $\epsilon > 0$. Suppose that $y\in \Phi_{s'}(B^{S}_{\lambda}) = \Phi_{s'}[z]$ for some $z\in X$. Then, $y = \Phi_{s'}(x)$ for some $x\in B^{S}_{\lambda}$. Let $\lbrace x = x_{0}, x_{1}$,...,$x_{k-1}, x_{k} = z\rbrace$ and $\lbrace z = x_{0}, x_{1}$,...,$x_{k'-1}, x_{k'} = x\rbrace$ are weak $\epsilon$-chains from $x$ to $z$ and from $z$ to $x$. Then, $\lbrace y = \Phi_{s'}(x), x = x_{0}, x_{1}$,...,$x_{k-1}, x_{k} = z\rbrace$ and $\lbrace z = x_{0}, x_{1}$,...,$x_{k'-1}, x_{k'} = x, \Phi_{s'}(x) \rbrace$ are weak $\epsilon$-chains from $y$ to $z$ and from $z$ to $y$. Hence, $\Phi_{s'}(B^{S}_{\lambda})\subset B^{S}_{\lambda}$. Since $s'$ and $\lambda$ was chosen arbitrary and $S$ is symmetric, we have  $B^{S}_{\lambda}\subset \Phi_{s'}(B^{S}_{\lambda})$. Thus, $B^{S}_{\lambda}$ is $\Phi$-invariant for every $\lambda\in \Lambda$. 
\end{proof}

\begin{theorem}
If $\Phi\in Act(G, X)$ has SSP, then $X = \Omega(\Phi)$. 
\label{5.11} 
\end{theorem}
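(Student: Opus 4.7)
The plan is to show that every $x\in X$ is non-wandering by feeding SSP a carefully chosen exact orbit that revisits $x$ infinitely often through a return induced by the symmetry of $S$. Fix $x\in X$ and an arbitrary $\epsilon>0$, and set $U=B(x,\epsilon/2)$. It suffices to exhibit a non-identity $g\in G$ with $\Phi_g(U)\cap U\neq\emptyset$, since the open balls form a neighborhood base at $x$.

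The construction I would use is the following. Pick any $s\in S$; since $S$ is symmetric, $s^{-1}\in S$ as well. Define the bi-infinite sequence $\{y_i\}_{i\in\mathbb{Z}}$ by $y_i=x$ when $i$ is even and $y_i=\Phi_s(x)$ when $i$ is odd. Choose the selectors $s_{y_i}=s$ when $i$ is even and $s_{y_i}=s^{-1}$ when $i$ is odd. Then for even $i$, $\Phi_{s_{y_i}}(y_i)=\Phi_s(x)=y_{i+1}$, and for odd $i$, $\Phi_{s_{y_i}}(y_i)=\Phi_{s^{-1}}(\Phi_s(x))=x=y_{i+1}$. Thus $\{y_i\}$ is an exact orbit in the SPO sense (a $0$-SPO), hence a $\delta$-SPO for every $\delta>0$.

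Now apply SSP with $\epsilon'=\epsilon/2$: there exist $z\in X$ and a sequence $\{g_i\}_{i\in\mathbb{Z}}$ in $G$ with $g_0=e$ and $g_i\neq e$ for all $i\neq 0$ such that $d(\Phi_{g_i}(z),y_i)<\epsilon/2$ for every $i$. Taking $i=0$ gives $d(z,x)<\epsilon/2$, so $z\in U$. Taking $i=2$ gives $d(\Phi_{g_2}(z),y_2)=d(\Phi_{g_2}(z),x)<\epsilon/2$, so $\Phi_{g_2}(z)\in U$. Setting $g=g_2$, we have $g\neq e$ and $\Phi_g(z)\in\Phi_g(U)\cap U$, witnessing $x\in\Omega(\Phi)$. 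Since $x$ was arbitrary and $\Omega(\Phi)\subset X$ trivially, we conclude $X=\Omega(\Phi)$.

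There is no real obstacle here: the whole proof hinges on finding any exact orbit that returns to $x$ at an index $i\neq 0$, and the symmetry of $S$ supplies a length-two return $x\to \Phi_s(x)\to x$ for free. The role of SSP is only to guarantee that the tracing group element at that returning index is forced to be non-identity, which is precisely what the condition $g_i\neq e$ for $i\neq 0$ delivers.
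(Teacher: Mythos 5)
Your proof is correct and takes essentially the same route as the paper: the paper extends the weak $\delta$-chain $x,\Phi_s(x),x$ periodically into a $\delta$-SPO, which is exactly your period-two sequence, then applies SSP and uses the requirement $g_i\neq e$ for $i\neq 0$ at a returning index to produce the non-identity element witnessing $x\in\Omega(\Phi)$. Your version is slightly more explicit about the construction of the pseudo-orbit and the choice of radii, but there is no substantive difference.
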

\begin{proof}
Let $\epsilon > 0$ and let $\delta>0$ be given for $\epsilon$ by SSP of $\Phi$. Then, there exists weak $\delta$-chain $x_{0}=x, x_{1}$,...,$x_{k} = x$ with sequence $\lbrace s_{x_i}\rbrace_{i=0}^{k-1}$. We can extend this chain to $\delta$-SPO $\lbrace y_{i}\rbrace_{i=-\infty}^{\infty}$, where $y_{ki+j} = x_{j}$ for all $i\in \mathbb{Z}$, $0\leq j\leq k$ with sequence $\lbrace s_{x_i}\rbrace_{i=-\infty}^{\infty}$, where $s_{x_{ki+j}} = s_{x_j}$ for all $i\in \mathbb{Z}$, $0\leq j\leq k$. Thus, by SSP there exists $z\in X$ which $\epsilon$-ST this orbit and so we can choose non-identity element $g_{k}$ such that $d(\Phi_{g_{k}}(z), x)< \epsilon$. Hence, $\Phi_{g_{k}}(U_{\epsilon}(x))\cap U_{\epsilon}(x)\neq \phi$, where $U_{\epsilon}(x)=\lbrace y\in X : d(y,x) <\epsilon \rbrace$. Since $\epsilon$ was chosen arbitrary, we have $x\in \Omega(\Phi)$.
\end{proof}

\begin{Lemma} 
If $\Phi$ has SSP, then $B^{S}_{\lambda}$ is open in $\Omega(\Phi)$ for each $\lambda$.
\label{5.8} 
\end{Lemma}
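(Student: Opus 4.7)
The plan is to identify $\Omega(\Phi)$ with $X$ via Theorem~\ref{5.11} and then, given $x \in B^{S}_{\lambda}$, exhibit a neighborhood of $x$ contained in $B^{S}_{\lambda}$. My strategy is to apply SSP to a pseudo-orbit engineered to visit both $x$ and any nearby candidate $y$, and convert the resulting tracing into weak chains at an arbitrarily fine scale.

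Fix $x \in B^{S}_{\lambda}$, a generator $s \in S$, and a target scale $\xi > 0$; let $\delta = \delta(\xi) \in (0, \xi)$ be furnished by SSP. For any $y \in X$ with $d(x, y) < \delta$, I would form the period-$4$ bi-infinite sequence $\{p_{i}\}_{i \in \mathbb{Z}}$ given by $p_{4k} = x$, $p_{4k+1} = \Phi_{s}(x)$, $p_{4k+2} = y$, $p_{4k+3} = \Phi_{s}(y)$, with the alternating generator sequence $s, s^{-1}, s, s^{-1}, \ldots$. Each step-jump equals $0$ or $d(x, y) < \delta$, so $\{p_{i}\}$ is a $\delta$-SPO. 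By SSP there exist $z \in X$ and $\{g_{i}\}$ with $g_{0} = e$, $g_{i} \neq e$ for $i \neq 0$, and $d(\Phi_{g_{i}}(z), p_{i}) < \xi$; in particular $d(z, x) < \xi$ and $d(\Phi_{g_{2}}(z), y) < \xi$. Decomposing $g_{2} = t_{1} \cdots t_{m}$ with $t_{j} \in S$, the orbit segment $z, \Phi_{t_{m}}(z), \Phi_{t_{m-1}t_{m}}(z), \ldots, \Phi_{g_{2}}(z)$ is a zero-jump weak $\xi$-chain. Prepending $\{x, \Phi_{s}(x), z\}$ with generators $s, s^{-1}$ (valid since $d(x, z) < \xi$) and appending $\{\Phi_{g_{2}}(z), \Phi_{s}(\Phi_{g_{2}}(z)), y\}$ assembles a weak $\xi$-chain from $x$ to $y$; a symmetric construction based on $g_{-2}$ (with $p_{-2} = y$) or $g_{4}$ (with $p_{4} = x$) furnishes the reverse chain. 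Hence $x WR^{S}_{\xi} y$.

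The principal obstacle is that the admissible radius $\delta(\xi)$ depends on $\xi$ and tends to $0$ as $\xi \to 0$, so the construction above yields $x WR^{S}_{\xi} y$ only on a $\xi$-dependent neighborhood of $x$, whereas openness demands a single $\xi$-independent neighborhood of $x$ on which $x WR^{S} y$ holds for \emph{every} $\xi$. Bridging this gap appears to require exploiting the unrestricted combinatorial freedom of the tracing sequence $\{g_{i}\}$ in the definition of SSP (the key feature that distinguishes it from classical shadowing), by iterating the SPO construction across scales and reusing coarser-scale weak chains as building blocks inside finer $\delta$-SPOs. This bootstrapping step is the crux of the proof and is precisely where SSP is genuinely stronger than ordinary shadowing.
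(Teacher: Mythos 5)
Your construction of the weak $\xi$-chains is correct as far as it goes: the period-$4$ sequence is indeed a $\delta$-SPO, the tracing point $z$ supplies a zero-jump orbit segment joining the $\xi$-balls of $x$ and $y$, and the splicing you describe does yield $x\, WR^{S}_{\xi}\, y$ for every $y$ with $d(x,y)<\delta(\xi)$. But the obstacle you name in your final paragraph is not a technicality to be bootstrapped away --- it is a genuine gap, and your proposal does not contain a proof. Openness of $B^{S}_{\lambda}$ requires a single radius $r>0$ with $U_{r}(x)\subset B^{S}_{\lambda}$, i.e.\ with $x\, WR^{S}_{\xi}\, y$ for \emph{all} $\xi>0$ simultaneously, whereas you only obtain $WR^{S}_{\xi}$ on the shrinking ball $U_{\delta(\xi)}(x)$. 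The hoped-for iteration across scales cannot succeed: feeding a coarse-scale chain into a finer $\delta$-SPO still returns a tracing point only within the coarse $\epsilon$ of $x$, so the jump needed to re-enter the chain at $x$ itself never improves. Indeed no argument can close this gap from SSP alone: by Theorem \ref{5.9} the trivial action on a Cantor set has SSP, and for the trivial action $x\, WR^{S}\, y$ means precisely that $x$ and $y$ are $\delta$-chain connected for every $\delta$, so the classes $B^{S}_{\lambda}$ are the connected components, i.e.\ singletons, none of which is open in $\Omega(\Phi)=X$.

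For comparison, the paper's own proof founders on exactly the same quantifier inversion, though by a different mechanism: it never invokes a tracing point at all, but splices $x'$ onto an existing weak $\delta$-chain via uniform continuity, producing for each $\delta$ a radius $\gamma(\delta)$ such that $x'\, WR^{S}_{\delta}\, y$ on $U_{\gamma(\delta)}(x)$, and then asserts $U_{\gamma}(x)\subset B^{S}_{\lambda}$ without addressing the dependence of $\gamma$ on $\delta$. Your attempt is, if anything, the more transparent of the two, since it actually uses SSP and isolates precisely the step at which both arguments fail; but as written it does not prove the lemma, and the lemma as stated appears to need additional hypotheses.
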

\begin{proof}
We have $B^{S}_{\lambda}\subset \Omega(\Phi) = WCR^{S}(\Phi) = X$. Let us fix $x\in B^{S}_{\lambda}$ and $\delta > 0$. Choose $0 <\gamma <\delta$ such that $d(x,y)<\gamma$ implies $d(\Phi_{s}(x),\Phi_{s}(y))<\delta$ for all $s\in S$. For $y\in B^{S}_{\lambda}$ there is a $\delta$-chain $\lbrace x=x_{0},$...,$x_{k} = y\rbrace$ with sequence $\lbrace s_{x_{0}},$...,$s_{x_{k}}\rbrace$ in $\Omega(\Phi)$. It is enough to show that $U_{\gamma}(x)\subset B^{S}_{\lambda}$. Let $x'\in U_{\gamma}(x)$, then $\lbrace x', \Phi_{s_{x_{0}}}(x'), x_{0},$...,$x_{k}=y\rbrace$ is a $\delta$-chain with sequence $\lbrace s_{x_{0}}, s^{-1}_{x_{0}}, s_{x_{0}},$...,$s_{x_{k}}\rbrace$. Since $x, y\in \Omega(\Phi)$ there is a $\delta$-chain from $\lbrace y=y_{0},$...,$y_{l}=x\rbrace$. Hence, for any $s_{y_{l}}\in S$, we can construct a $\delta$-chain $\lbrace y=y_{0},$...,$y_{l}=x,\Phi_{s_{y_{l}}}(x'), x'\rbrace$.
\end{proof}

\begin{theorem}
If $\Phi\in Act(G,X)$ is weak chain transitive having SSP, then it is transitive.  
\label{5.12}
\end{theorem}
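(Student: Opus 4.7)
The plan is to fix arbitrary non-empty open $U,V\subset X$ and produce a non-identity $g\in G$ with $\Phi_g(U)\cap V\neq \phi$. First I would pick $x\in U$, $y\in V$ together with $\epsilon>0$ small enough that $U_\epsilon(x)\subset U$ and $U_\epsilon(y)\subset V$, and I would let $\delta>0$ be the constant furnished for $\epsilon$ by SSP. Weak chain transitivity then provides a weak $\delta$-chain $x=u_0,u_1,\ldots,u_k=y$ from $x$ to $y$ and a weak $\delta$-chain $y=v_0,v_1,\ldots,v_l=x$ from $y$ to $x$, each carrying its own sequence of generators from $S$.

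The core of the argument is to splice these into a bi-infinite $\delta$-SPO. I would concatenate the two chains into a single finite loop of length $N=k+l$ that starts and ends at $x$ and passes through $y$ at index $k$, then extend this loop together with its generator sequence periodically to all of $\mathbb{Z}$ by setting $z_{jN+i}=z_i$ and $s_{z_{jN+i}}=s_{z_i}$ for every $j\in\mathbb{Z}$ and $0\leq i<N$. Because the loop closes up at $x$, the junction conditions between successive copies reduce to the defining inequalities of the original chains, so the extended sequence is a genuine $\delta$-SPO in the sense of Definition \ref{5.3}.

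SSP now yields a tracing point $z\in X$ and a sequence $\{g_i\}_{i\in\mathbb{Z}}$ with $g_0=e$ and $g_i\neq e$ for every $i\neq 0$, such that $d(\Phi_{g_i}(z),z_i)<\epsilon$ for all $i$. The inequality at $i=0$ places $z$ in $U_\epsilon(x)\subset U$, while the inequality at $i=k$ places $\Phi_{g_k}(z)$ in $U_\epsilon(y)\subset V$. Since $k\geq 1$ the element $g_k$ is non-identity, so $\Phi_{g_k}(z)\in\Phi_{g_k}(U)\cap V$, which finishes the proof.

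The only real obstacle is the degenerate case $x=y$, which could in principle force $k=0$ and rob the conclusion of a non-identity element. I would handle it by padding the chain from $x$ to itself with the length-two loop $\{x,\Phi_s(x),x\}$ from the remark preceding Lemma \ref{5.7}, which preserves the weak $\delta$-chain structure while guaranteeing that $y=x$ is revisited at a strictly positive index.
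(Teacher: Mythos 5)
Your proposal is correct and follows essentially the same route as the paper, which disposes of this theorem by declaring it ``similar to the proof of Theorem \ref{5.11}'': concatenate the two weak $\delta$-chains into a loop through $x$ and $y$, extend periodically to a bi-infinite $\delta$-SPO, and read off the non-identity element $g_k$ from the sequential tracing. You have in fact supplied the details (the junction checks at the splice points and the degenerate case $k=0$) that the paper leaves implicit.
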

\begin{proof}
The proof is similar to the proof of Theorem \ref{5.11}.   
\end{proof}
\begin{Corollary}
If $\Phi\in Act(G, X)$ has SSP, then 
\begin{enumerate}
\item $\Omega(\Phi)= CR(\Phi)= X$.
\item $\Phi$ is weak chain transitive if and only if $\Phi$ is transitive.
\end{enumerate}
\label{5.13}
\end{Corollary}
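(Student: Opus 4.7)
The plan is to reduce both parts to results already established in this section. For part (2), one direction is immediate: Corollary \ref{5.6} supplies transitivity $\Rightarrow$ weak chain transitivity, and Theorem \ref{5.12} provides the reverse implication under SSP, so the two notions coincide. For part (1), Theorem \ref{5.11} already furnishes $\Omega(\Phi) = X$, so only $CR(\Phi) = X$ remains. Since $CR(\Phi) \subseteq X$ is trivial, the task reduces to proving the inclusion $\Omega(\Phi) \subseteq CR(\Phi)$, which chained with Theorem \ref{5.11} closes part (1).

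To establish $\Omega(\Phi) \subseteq CR(\Phi)$, I would fix $x \in \Omega(\Phi)$ and $\delta > 0$, and first use uniform continuity of the generators to pick $\gamma \in (0, \delta/3)$ so that $d(a,b) < \gamma$ forces $d(\Phi_s(a), \Phi_s(b)) < \delta/3$ for every $s \in S$. Non-wanderingness of $x$ then produces a non-identity $g \in G$ and a point $y \in X$ with $d(y,x) < \gamma$ and $d(\Phi_g(y), x) < \gamma$. I would then define $f : G \to X$ by $f(e) = f(g) = x$ and $f(h) = \Phi_h(y)$ for every other $h \in G$. Since $g \neq e$, once $f$ is shown to be a $\delta$-pseudo orbit it is a $\delta$-pseudo orbit from $x$ to $x$, so $x \in CR(\Phi)$.

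Verifying $d(\Phi_s(f(h)), f(sh)) < \delta$ splits into cases indexed by whether each of $h, sh$ lies in $\{e,g\}$, along the lines of Theorem \ref{4.9}(i). The ``neither modified'' case is an exact orbit identity, while the ``exactly one modified'' cases collapse after one telescoping step to $d(\Phi_s(x),\Phi_s(y)) < \delta/3$ or to $d(\Phi_g(y),x) < \gamma$. The main obstacle is the ``both modified'' case: it occurs only when $g \in S$ (forcing $s = g$, $h = e$) or when $g^{-1} \in S$ (forcing $s = g^{-1}$, $h = g$), and requires controlling $d(\Phi_g(x), x)$ or $d(\Phi_{g^{-1}}(x), x)$. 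Both are handled by the triangle inequality, e.g.\ $d(\Phi_g(x), x) \leq d(\Phi_g(x), \Phi_g(y)) + d(\Phi_g(y), x) < \delta/3 + \gamma < \delta$, and symmetrically using $g^{-1} \in S$; when $g$ has word length at least two the case is vacuous. This is precisely what dictates the choice $\gamma < \delta/3$ made at the outset.
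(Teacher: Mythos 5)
Your proof is correct. Part (2) is handled exactly as in the paper: Corollary \ref{5.6} gives transitivity $\Rightarrow$ chain transitivity $\Rightarrow$ weak chain transitivity, and Theorem \ref{5.12} gives the converse under SSP. For part (1) you diverge from the paper in one respect: after invoking Theorem \ref{5.11} for $\Omega(\Phi)=X$, the paper simply cites Lemma 2.10 of \cite{B1} for the inclusion $\Omega(\Phi)\subseteq CR(\Phi)$, whereas you prove that inclusion from scratch. Your construction is sound: taking $y\in U_{\gamma}(x)$ with $d(\Phi_g(y),x)<\gamma$ for some $g\neq e$, setting $f(e)=f(g)=x$ and $f(h)=\Phi_h(y)$ otherwise, and running the four-case check does produce a $\delta$-pseudo orbit from $x$ to $x$ in the sense of the paper's definition (the required two distinct group elements mapping to $x$ are $e$ and $g$). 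The only genuinely delicate case is your ``both modified'' one, which as you note arises only when $L(g)=1$, and your estimates $d(\Phi_g(x),x)\leq d(\Phi_g(x),\Phi_g(y))+d(\Phi_g(y),x)<\delta/3+\gamma<\delta$ (and the symmetric one using $g^{-1}\in S$, which is where the symmetry of $S$ is actually needed) close it. The trade-off is the expected one: the paper's proof is a two-line citation but leaves the corollary dependent on an external lemma; yours is longer but self-contained and makes visible exactly where uniform continuity and the symmetry of the generating set enter.
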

\begin{proof}
Proof of (1) follows from Theorem \ref{5.11} and Lemma 2.10 \cite{B1} and proof of (2) follows from Theorem \ref{5.12} and Corollary \ref{5.6}.
\end{proof}

\begin{theorem}
Let $X$ be a connected metric space. If $\Phi\in Act(G,X)$ has SSP, then it is transitive. In addition, if $X$ is compact then transitivity of $\Phi$ implies that $\Phi$ has SSP.      
\label{5.14}
\end{theorem}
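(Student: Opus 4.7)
For the first implication, the plan is to use the weak-$S$-equivalence decomposition $X=\bigsqcup_{\lambda\in\Lambda}B^S_\lambda$ introduced right after Definition \ref{5.2}. Lemma \ref{5.7} gives that each $B^S_\lambda$ is closed, Theorem \ref{5.11} gives $\Omega(\Phi)=X$ under SSP, and Lemma \ref{5.8} then upgrades each $B^S_\lambda$ to an open subset of $X$. So $\lbrace B^S_\lambda\rbrace_{\lambda\in\Lambda}$ is a partition of the connected space $X$ into clopen sets, hence consists of a single block, so $\Phi$ is weak chain transitive; Theorem \ref{5.12} then delivers transitivity.

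For the converse, assume $X$ is compact and connected and $\Phi\in Act(G,X)$ is transitive. The case $|X|=1$ is trivial, so assume $|X|\geq 2$; connectedness then rules out isolated points in $X$. Since $X$ is Baire and second countable, the standard Baire-category argument with a countable base $\lbrace U_n\rbrace$ shows that the set $T(\Phi):=\bigcap_n\bigcup_{g\in G}\Phi_{g^{-1}}(U_n)$ of points with dense $\Phi$-orbit is a dense $G_\delta$: each $\bigcup_g\Phi_{g^{-1}}(U_n)$ is open, and topological transitivity (applied to $U_n$ against an arbitrary non-empty open set) forces it to be dense.

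Fix $\epsilon>0$ and choose any $\delta>0$; the claim I would establish is that every $\delta$-SPO (in fact every bi-infinite sequence in $X$) is $\epsilon$-ST, so the particular $\delta$ is immaterial. Given $\lbrace x_i\rbrace_{i\in\mathbb{Z}}$, pick $y\in T(\Phi)$ with $d(y,x_0)<\epsilon$ and set $g_0=e$. For each $i\neq 0$, I produce $g_i\neq e$ with $\Phi_{g_i}(y)\in U_\epsilon(x_i)$ from the density of the orbit of $y$: if $y\notin U_\epsilon(x_i)$ any density witness is automatically distinct from $e$ (since $\Phi_e(y)=y$); if $y\in U_\epsilon(x_i)$ I instead apply density to the non-empty open set $U_\epsilon(x_i)\setminus\lbrace y\rbrace$ (non-empty because $y$ is not isolated), which forces $\Phi_{g_i}(y)\neq y$ and hence $g_i\neq e$.

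The main subtlety is exactly this constraint $g_i\neq e$ for $i\neq 0$ in the definition of $\epsilon$-ST; connectedness of $X$ does double duty here, collapsing the clopen partition in the first implication and ruling out isolated points in the second (otherwise one could have $U_\epsilon(x_i)=\lbrace y\rbrace$ and be unable to extract a non-identity $g_i$ from density alone).
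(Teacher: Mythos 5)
Your proof is correct, but both halves diverge from the paper's own argument in instructive ways. For the first implication the paper does not invoke the clopen partition $\lbrace B^{S}_{\lambda}\rbrace$ at all: it proves weak chain transitivity directly by taking a $\delta$-chain $z_{0}=x,\dots,z_{k}=y$ (which exists in any connected metric space, since the set of points $\delta$-chain-reachable from $x$ is clopen) and interpolating $z_{j},\Phi_{s'}(z_{j+1}),z_{j+1}$ to turn it into a weak $\epsilon$-chain, then applies Theorem \ref{5.12}. Your route through Lemma \ref{5.7}, Theorem \ref{5.11} and Lemma \ref{5.8} is the same machinery the paper deploys for the spectral decomposition in Theorem \ref{5.15}, so it is certainly available and equally valid; the paper's version is more self-contained (it needs only uniform continuity and chainability, not the openness statement of Lemma \ref{5.8}), while yours makes the structural reason --- connectedness collapses the clopen partition to a single class --- completely transparent. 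For the converse the paper offers only the one-line remark that ``transitivity implies existence of a dense orbit,'' so your Baire-category construction of $T(\Phi)$ and the subsequent tracing argument is exactly the expansion the paper leaves implicit. In particular your handling of the constraint $g_{i}\neq e$ for $i\neq 0$ --- splitting on whether $y\in U_{\epsilon}(x_{i})$ and using that a connected space with at least two points has no isolated points --- addresses a genuine gap in the paper's throwaway sentence, and your observation that \emph{every} bi-infinite sequence is $\epsilon$-ST (so the choice of $\delta$ is irrelevant) is a clean way to close the argument. One small remark: since the paper's definition of transitivity already requires a non-identity witness, you could have defined $T(\Phi)$ with the union taken over $g\neq e$ and obtained non-identity tracing elements for free, shortening the case analysis; but as written your argument is complete.
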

\begin{proof}
By Theorem \ref{5.11}, $X = \Omega(\Phi)$ which is closed and $\Phi$-invariant. We claim that $\Phi$ is weak chain transitive. Let $x$, $y$ be two distinct points in $X$ and $\epsilon > 0$. Choose $0 < \delta < \epsilon$ such that $d(a,b) < \delta$ implies that $d(\Phi_{s}(a), \Phi_{s}(b)) < \epsilon$ for all $s\in S$. By connectedness, choose a sequence of points $z_0 = x,$...$,z_k = y$ so that $d(z_{j+1},z_j) < \delta$ for all $j = 0,$...$,k-1$. 
Let $s'\in S$ be fixed and observe that the sequence $z_0 = x, \Phi_{s'}(z_{1}), z_{1}, \Phi_{s'}(z_{2}), z_{2}$...$z_{k-2}, \Phi_{s'}(z_{k-1}), z_{k-1}, \Phi_{s'}(z_{k}), z_k = y$ forms weak $\epsilon$-chain from $x$ to $y$. Similarly, we can form a weak $\epsilon$-chain from $y$ to $x$. Since $\epsilon$ was chosen arbitrary, we get that $x WR y$. Thus, $\Phi$ is weak chain transitive and hence by Theorem \ref{5.12} $\Phi$ is transitive. The additional fact holds because transitivity implies existence of dense orbit.   
\end{proof}


\begin{theorem}
Let $X$ be a compact metric space. If $\Phi\in Act(G,X)$ has SSP, then $X$ can be written as finite union of disjoint closed $\Phi$-invariant sets on each of which $\Phi$ is transitive. In addition, if $X$ is connected then the decomposition is trivial.    
\label{5.15}
\end{theorem}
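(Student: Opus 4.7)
The plan is to exploit the equivalence relation $WR^S$ and the decomposition $X = \bigsqcup_{\lambda \in \Lambda} B^S_{\lambda}$ that is already available from the preceding material. By Lemma \ref{5.7}, each $B^S_{\lambda}$ is closed and $\Phi$-invariant. Combining Theorem \ref{5.11} (which gives $\Omega(\Phi) = X$ under SSP) with Lemma \ref{5.8}, each $B^S_{\lambda}$ is also open in $X$. So $\{B^S_{\lambda}\}$ is a partition of $X$ into disjoint clopen sets. Compactness of $X$ then forces $|\Lambda| < \infty$, since $\{B^S_{\lambda}\}$ is an open cover of $X$ by pairwise disjoint sets and any finite subcover must include every nonempty member.

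Next I would show that $\Phi\vert_{B^S_{\lambda}}$ inherits SSP. Because there are only finitely many pairwise disjoint closed sets $B^S_{\mu}$ in the compact space $X$, the number $\eta = \min_{\mu \neq \lambda} d(B^S_{\lambda}, B^S_{\mu})$ is strictly positive. Given $\epsilon > 0$, set $\epsilon' = \min\{\epsilon, \eta/2\}$ and apply SSP of $\Phi$ to obtain $\delta > 0$ so that every $\delta$-SPO in $X$ is $\epsilon'$-ST. A $\delta$-SPO $\{x_i\}$ lying entirely in $B^S_{\lambda}$ is in particular a $\delta$-SPO in $X$, so there exist $x \in X$ and a sequence $\{g_i\}$ with $g_0 = e$ such that $d(\Phi_{g_i}(x), x_i) < \epsilon'$. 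In particular $d(x, x_0) < \epsilon' \leq \eta/2$, which places $x$ in $B^S_{\lambda}$ by choice of $\eta$. Since every element of $B^S_{\lambda}$ is weakly related to every other by construction, $\Phi\vert_{B^S_{\lambda}}$ is weak chain transitive; Theorem \ref{5.12} now yields transitivity on each piece.

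For the connectedness addendum, a connected space admits no nontrivial partition into disjoint nonempty clopen subsets, so if $X$ is connected then $\Lambda$ is a singleton and the decomposition is $X$ itself; Theorem \ref{5.14} confirms that $\Phi$ is transitive on $X$ in this case.

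The main obstacle I expect is the inheritance of SSP by each piece $B^S_{\lambda}$; it is not immediate from the definition, and the argument crucially relies on having already established that the decomposition is finite and clopen, which in turn is what allows the pairwise separation distance $\eta$ to be positive and thus keeps the tracing point inside the intended piece.
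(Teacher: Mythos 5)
Your proposal is correct and follows essentially the same route as the paper: the clopen decomposition $X=\bigsqcup_\lambda B^S_\lambda$ from Theorem \ref{5.11}, Lemma \ref{5.8} and Lemma \ref{5.7}, finiteness by compactness, inheritance of SSP on each piece via the positive separation distance between the pieces, and then Theorem \ref{5.12} (resp.\ Theorem \ref{5.14}) for transitivity (resp.\ the connected case). Your write-up actually supplies the details the paper leaves implicit, in particular why the tracing point of a $\delta$-SPO contained in $B^S_\lambda$ must itself lie in $B^S_\lambda$.
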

\begin{proof} By Theorem \ref{5.11}, Lemma \ref{5.8} and Lemma \ref{5.7}, $X = \cup_{i=1}^{p}B^{S}_{\lambda_{i}} = \Omega(\Phi)$ where $B^{S}_{\lambda_{i}}$ are closed and invariant, for all $1\leq i\leq p$. 
For given $0 < \epsilon < min \lbrace d(B^{S}_{\lambda_{i}}, B^{S}_{\lambda_{j}}) : 1\leq i, j\leq p\rbrace$,  choose $ 0 < \delta < \epsilon$ by the SSP of $\Phi|_{\Omega(\Phi)} = \Phi$. Note that every $\delta$-SPO through $\Phi|_{B^{S}_{\lambda_{i}}}$ will be $\epsilon$-ST through a point of $B^{S}_{\lambda_{i}}$, for each $1\leq i\leq p$. Hence $\Phi|_{B^{S}_{\lambda_{i}}}$ has SSP, for each $1\leq i\leq p$. Since $\Phi|_{B^{S}_{\lambda_{i}}}$ is weakly chain transitive with respect to $S$, Theorem \ref{5.12} completes a proof of the first part. The additional part is clear from Theorem \ref{5.14}.  
\end{proof}

\begin{Example} Let $G = < a, b : ba =a^{n}b>$ be a finitely generated group, $\lambda > n > 1$ and 
\[
A =
\begin{bmatrix}
1 & 0 \\
1 & 1
\end{bmatrix},
B = 
\begin{bmatrix}
\lambda & 0 \\
0 & n\lambda
\end{bmatrix}
\]
Define $\Phi : G\times \mathbb{R}^{2} \rightarrow \mathbb{R}^{2}$ by $\Phi_{a}(x) =Ax$ and $\Phi_{b}(x) = Bx$ for all $x\in \mathbb{R}^{2}$. Note that any open disk of radius $\frac{p}{4}$ centred at point $(p, p)$, for $p > 0$ is moved vertically upward or downward under $\Phi_{a}$ and move only within the first quadrant without touching any axis under $\Phi_{b}$. Thus, under $\Phi$ such disk always move in first or fourth quadrant without touching $y$-axis. Therefore, $\Phi$ cannot be transitive. By Theorem 4.4 \cite{O2} $\Phi$ has the shadowing property and hence by Lemma 2.9 \cite{B1}, $\Phi$ is not chain transitive. 
\label{5.17}
\end{Example} 
\begin{Example} 
Let $G = < a, b : ba =a^{2}b>$ be a finitely generated group and $m>1$ be a fixed integer. Define $\Phi : G\times \mathbb{R}\rightarrow \mathbb{R}$ by $\Phi_{a}(x) = x$ and $\Phi_{b}(x) = mx$ for fixed $m > 1$. By Example \ref{E4.8}, $\Phi$ has the shadowing property. Since $\Phi$ is not transitive, therefore by Lemma 2.9 \cite{B1} it is not chain transitive. Clearly, $\Phi$ is weakly chain transitive. Thus, weak chain transitivity need not imply transitivity or chain transitivity. Further by Theorem \ref{5.12}, $\Phi$ does not have sequential shadowing property. Therefore, we conclude that the shadowing property need not imply sequential shadowing property.  
\label{5.16}
\end{Example} 

Acknowledgements: The first author is supported by CSIR-Junior Research Fellowship (File No.-09/045(1558)/2018-EMR-I) of  Government of India. The second author is supported by Department of Science and Technology, Government of India, under INSPIRE Fellowship (Resgistration No-IF150210) program.

\end{document}